\documentclass[reqno]{amsart}
\usepackage{hyperref,amsmath,mathrsfs}

\DeclareMathOperator*{\esssup}{ess\,sup}
\DeclareMathOperator*{\essinf}{ess\,inf}

\begin{document}
\title[Hemivariational Inequalities on Graphs]
{Hemivariational Inequalities on Graphs}

\author[N. Ait Oussaid]
{Nouhayla Ait Oussaid}

\author[K. Akhlil]
{Khalid Akhlil}

\author[S. Ben Aadi]
{Sultana Ben Aadi}

\author[M. El Ouali]{Mourad El Ouali}

\author[A. Srivastav]{Anand Srivastav}

\address{\newline Nouhayla Ait Oussaid, Khalid Akhlil, Sultana Ben Aadi and Mourad El Ouali\newline
Department of Mathematics and Management \newline
Polydisciplinary Faculty of Ouarzazate\newline
Ibn Zohr University\newline
Agadir, Morocco.}
\address{\newline Anand Srivastav\newline
Research Group Discrete Optimization\newline
Christian-Albrechts-Universität zu Kiel\newline
Kiel, Germany.}

\email{~\newline  nouhayla.aitoussaid@gmail.com\newline
k.akhlil@uiz.ac.ma\newline 
 sultana.benaadi@edu.uiz.ac.ma\newline
 mauros1608@gmail.com\newline
 srivastav@math.uni-kiel.de
 }

%\dedicatory{Dedicated to...}

\date{\today}
%\thanks{The auhtor is supported by ``Deutscher Akademischer Austausch Dienst``(German Academic Exchange Service).}
\subjclass[2000]{49J40;49J52;49J53;05C63;35J60;35K55;49A70
}
\keywords{Locally finite graphs; Nonconvex sum functionals; Hemivariational inequality; Clarke's subdifferential}

\begin{abstract}
In this paper, a new class of hemivariational inequalities is introduced. It concerns Laplace operator on locally finite graphs together with multivalued nonmonotone nonlinearities expressed in terms of Clarke's subdifferential. First of all, we state and prove some results on the subdifferentiability of nonconvex functionals defined on graphs. Thereafter, an elliptic hemivariational inequality on locally finite graphs is considered and the existence and uniqueness  of its weak solutions are proved by means of the well-known surjectivity result for pseudomonotone mappings. In the end of this paper, we tackle the problem of hemivariational inequalities of parabolic type on locally finite graphs and we prove the existence of its weak solutions.
\end{abstract}

\maketitle \setlength{\textheight}{19.5 cm}
\setlength{\textwidth}{12.5 cm}
\newtheorem{theorem}{Theorem}[section]
\newtheorem{lemma}[theorem]{Lemma}
\newtheorem{proposition}[theorem]{Proposition}
\newtheorem{corollary}[theorem]{Corollary}
\theoremstyle{definition}
\newtheorem{definition}[theorem]{Definition}
\newtheorem{example}[theorem]{Example}
\theoremstyle{remark}
\newtheorem{remark}[theorem]{Remark}
\numberwithin{equation}{section} \setcounter{page}{1}

\newcommand{\LG}{\mathscr L_{\gamma,\kappa}^{\m G}}
\newcommand{\LGn}{\mathscr L_{\gamma,\kappa}^{\m G_n}}
\newcommand{\m}{\mathsf}
\newcommand{\la}{\langle}
\newcommand{\ra}{\rangle}
\newcommand{\ol}{\overline}
\newcommand{\ul}{\underline}

%%%%%%%%%%%%%%%%%%%%%%%%%%%%%%%%%%%%%%%%%%%%%%%

%%%%%%%%%%%%%%%%%%%%%%%%%%%%%%%%%%

\section{Introduction}

Discrete calculus incorporates the various research works that focus on developing a proper theory for differential operators on discrete spaces with a net separation from the classical continuous calculus. From this perspective, discrete calculus should be differentiated from discretized calculus which concerns the discretization of the continuous framework for numerical and algorithmic purposes. Difference calculus, as a particular case of discrete calculus, is performed generally on the $d-$dimensional lattice graph (or grid) $\mathbb Z^d$ for some $d\geq 1$ and plays the role of an intermediate discipline. Discrete calculus aims then to establish a distinct and coherent core of calculus that operates purely in the discrete space without any reference to an underlying continuous counterpart. The philosophy behind this, is the fact that there is a solid connection between dynamics and the mathematical description of the space where they occur\cite{T76,GP10}.

The spaces of predilection for the discrete calculus are graphs and networks, from which cell complexes arise as general space structures \cite{BLW86}. The first application of graph theory to the modelling of physical systems came from Kirchhoff, who both developed the basic laws of circuit theory and also made fundamental contributions to graph theory\cite{K1847}. Among applications of modern graph theory one can mention manifold learning, filtering (denoising), content extraction,  ranking, clustering, and network characterization. The main technique here is to use the data to define weights on the network and then methods are used to formulate content extraction problems as convex energy minimization problems \cite{GA09,S99}. Nonconvex energy models appears also in data filtering on graphs with explicit discontinuities (rapid data change) \cite{MS89} and in nonsmooth nonconvex Regularizer in variational models for image restoration and segmentation \cite{GR92,JK14,N05}

After a decade from the development of the theory of nonlinear circuit networks in the sixties, operator theory on infinite graphs and the underlying Sobolev spaces began to be systematically developed as a theoretical core for studying elliptic and parabolic problems on graphs and networks. An embryonic study was initiated by M. Yamasaki and co-authors in \cite{NY76,Y75,Y77} and more elaborated work is  exposed by M.I. Ostrovskii in \cite{O05}. For some important use of the discrete version of  Sobolev spaces we refer to \cite{GP10} and references therein. The most modern expository on discrete operators and Sobolev spaces is the book of D. Mugnolo \cite{M14} where the central topic is the interplay of differential, difference operators and subdifferentials of convex functionals with the functional analytic theory of evolution equations together with combinatorial methods.

In many physical and social phenomena, the Laplacian operator arises naturally in the mathematical description of diffusion through discrete and continuous media. Discrete diffusion theory \cite{K64} based on discrete Fick's law \cite{HL09}, can be certainly considered as an approximation of its continuous counterpart, nonetheless problems still frequently arise where it would be advantageous  to have access to a diffusion theory valid specifically for discrete media \cite{K64}. The starting point for this theory is the formulation of Laplace operator on graphs and its associated discrete energy functional. Nakamura and Yamasaki introduced (for $\gamma\equiv1$ and $\kappa\equiv 0$) in \cite{NY76} on an infinite graph $\m G$ with node set $\m V$ the convex functional
\[
\mathscr E^p_{\gamma,\kappa}:\,\mathbb R^\m V\ni\phi\mapsto\frac{1}{p}\sum_{\substack{\m v,\m w\in\m V\\\m w\sim \m v}}\gamma(\m v,\m w)|\phi(\m v)-\phi(\m w)|^p+\frac{1}{p}\sum_{\m v\in\m  V}\kappa(\m v)|\phi(\m v)|^p\in[0,\infty]
\]The associated operator ($\LG:=\partial\mathscr E^p_{\gamma,\kappa}$) is nothing but the discrete $p-$Laplace operator. Let us note, parenthetically, the remark in \cite{M13}, that the development of the theory of nonlinear electric circuits and the theory of monotone operators and subdifferentials of convex functionals was simultaneous  by Minty and Rockafellar among others \cite{M60}. Different aspects of the discrete $p-$laplacian are studied in the literature \cite{GW16,KL10,M13} and found applications in nonlinear circuit theory, spectral clustering and image processing, sphere packing problem and with the tug-of-war-theory\cite{ELB08,EB17,GA09,HHK10,HL09,TBEL07,TA12} or emerging phenomena of a population of dynamically interacting units \cite{HHK10,TA12}. A systematic study of the Laplacian operators on graphs is achieved with means of discrete Dirichlet forms by D. Lenz and co-authors \cite{KL10,HKLW12} and references therein. For the Laplacian on finite weighted graphs with a nonlinear terms  we refer to \cite{HTM18} and references therein.

In the development of functional analysis on graphs, the finite difference operator plays a fundamental role. It started with an intuition that goes back to G. Boole \cite{B1860} revealing that the operator 
\[
\mathcal I^T \phi(\m v,\m w):=\phi(\m v)-\phi(\m w),\quad \phi\in\mathbb R^\m V
\]can be looked as a discretized version of the first derivative of the function $\phi$ defined in all points of the underlying graph $\m G$. In the definition of Sobolev spaces on graphs, the operator $\mathcal I^T \phi$ is an equivalent of the gradient. The parallel between discrete functional calculus and the classical continuous settings can be made by taking into account the following rules: Scalar functions replaces vectors of the node set, vector fields replace vectors of the edge set and gradient of scalar functions at some point replace evaluation of the difference operator at an edge.

%Before to tackle the goals of this paper let us elucidate a common misunderstanding in the literature of the Laplacian on graphs. It is  frequently referred to as \textit{non-local} operator by, seemingly, having the nonlocal Laplacian in the continuous context as reference. In the form of $\LG$, indeed one can see that there is an interaction between two different vertices, nevertheless, the jumps are of one rank size. This is evidently apparent from the fact that the evaluation of $\LG\phi$ needs the values of $\phi$ only for adjacent vertices, that is the immediate neighbour. It is in this sense that one needs to think about locality or nolocality on graphs. Philosophically speaking, one needs to disconnect the concept of discreteness and the concept of nonlocality and what should determine the concept of locality in discrete calculus theories is the range of jumps in the definition of difference-differential operators. The one rank range jump makes the diffusion over the graph a local matter which means that starting from a vertice the heat is diffused to the adjacent vertices and so on in the manner of a random walk. To have a nonlocal Laplacian one should omit the adjacency in the definition of the operator but should also control the weight to avoid infinite range jumps. This is not the subject of this paper.

The goal of this paper is to formulate a new class of variational-type inequalities consisting of nonmonotone multivalued perturbation of the discrete Laplacian on a locally finite graph. The pseudomonotone term is brought by a nonconvex functional defined  by an integral. Let $\m j:\mathbb R\rightarrow\mathbb R$ be a locally Lipschitz function whose Clarke's subdifferential satisfies a growth condition. The following sum functional
\begin{equation*}
\m J(\phi)=\sum_{\m v\in\m V}\mu(\m v)\m j(\phi(\m v))
\end{equation*}can be looked as a discretized version of the well-studied integral functionals of the form
\begin{equation*}
\m J(\phi)=\int_\mathscr O\m j(\phi(x))\,d\mu(x)
\end{equation*}
The primary question in such situation is the relation between the subdifferential of $\m j$ and $\m J$. In the integral functional case, this is what we commonly call Aubin-Clarke theorem. In this paper, we prove its discrete couterpart, that is
\begin{equation*}
\partial \m J(\phi)\subset\sum_{\m v\in\m V}\mu(\m v)\partial \m j(\phi(\m v))
\end{equation*}
Having this discrete version of Aubin-Clarke theorem at ones disposal one may formulate elliptic problem as follows

\begin{equation}
\la\LG\phi,\psi-\phi\ra+\sum_{\m v\in\m V}\mu(\m v)\m j^\circ(\phi(\m v);\psi(\m v)-\phi(\m v))\geq\la\m f,\psi-\phi\ra
\end{equation}
where $\m j^\circ$ is the generalized Clarke directional derivative of $\m j$ and its parabolic counterpart
\begin{equation}
\la\phi'+\LG\phi,\psi-\phi\ra+\sum_{\m v\in\m V}\mu(\m v)\m j^\circ(t,\phi(\m v);\psi(\m v)-\phi(\m v))\geq\la\m f,\psi-\phi\ra
\end{equation}Such problems will be called discrete hemivariational inequalities or hemivariational inequalities on graphs.

The general theory of hemivariational theory is a natural generalization of the classical variational theory where convex energy functionals are involved. Mathematical formulation of many engineering problems reveals cases that lack of monotonicity and corresponds to nonconvex superpotentials which cannot be formulated by the classical variational tools. By applying the mathematical notion of generalized gradient of Clarke \cite{C90}, Panagiotopoulos \cite{P93} introduced for the first time the so-called hemivariational inequalities. Since then, such formulation found applications in many fields, i.e. Navier-Stokes equations \cite{1MBA20,2MBA20}, boundary value problems\cite{AABE21,M04}, frictional contact \cite{MZ18}, history-dependent problems \cite{SMH18}, nonlocal problems \cite{ZT17} to name a few. Different methods are applied for the solvability of hemivariational inequalities, we can mention Galerkin approximation method, critical point theory, surjectivity theorems, extremal solutions method, Rothe approximation method , equilibrium problem method, penalty method..etc. The main assumption on the locally Lipschitz function include Rauch condition, growth condition or unilateral growth condition.

The remainder of the paper is structured as follows. In Section 2 we recall the functional setting on graphs and some concepts from nonsmooth analysis. In Section 3 we extend to the framework of locally finite graphs the Aubin-Clarke Theorem concerning the subdifferentiability of sum functionals. This may serve as a building block for developing variational methods for elliptic and parabolic problems involving the discrete Laplace operator and nonsmooth corresponding energy functional. In Section 3, we prove the existence and uniqueness of the elliptic hemivariational problem on locally finite graphs. The main tool is the well-know surjectivity result for pseudomonotone mappings. Section 4 is devoted to the discrete parabolic hemivariational inequality. The existence of a solution is reached by the use of a surjectivity result for the sum of maximal monotone and pseudomonotone mappings. In the last section, we provide some extensions to the problems discussed in previous sections. It concerns Galerkin scheme for discrete hemivariational inequalities, discrete variational-hemivariational inequalities and discrete quasi-hemivariational inequalities.

\section{Preliminaries}

\subsection{Sobolev spaces on graphs}For the concepts on graphs used in this paper and the underlying functional analysis which is the theoretical core we deploy in our formulations, we refer to the complete and self-contained book\cite{M14}.

Let $\m G=(\m V,\m E)$ be a direct graph, where $\m V$ is the set of nodes, which is finite or countable set and $E$ the set of edges, which is a subset of $\m V\times\m V$. A weighted graph is a quadruple $\m G=(\m E,\m V,\rho,\mu)$ where $(\m V,\m E)$ is a direct graph,  $\mu:\m V\rightarrow (0,\infty)$ is some given function and $\rho:\m E\rightarrow (0,\infty)$ is some other given function such that $\rho(\m e)=\rho(\bar{\m e})$ whenever $\m e,\,\bar{\m e}\in\m E$ ($\bar{\m e}=(\m w,\m v)$ when $\m e=(\m v,\m w)$). For $\m e=(\m v,\m w)$, we note $\m e_{-}:=\m v$ the initial endpoint of $\m e$ and $\m e_{+}:=\m w$ the terminal endpoint of $\m e$ and we say that they are adjacent (shortly $\m v\sim\m w$).

Define
\begin{equation*}
  \eta_{\m{ve}}^+=\left\{
  \begin{aligned}
1 & \quad\text{if }v\text{ is initial endpoint of }\m e\\
0 &\quad\text{otherwise}
\end{aligned}
 \right.
   ,\qquad  \eta_{\m{ve}}^-=\left\{ 
  \begin{aligned}
1 & \quad\text{if }v\text{ is terminal endpoint of }\m e\\
0 &\quad\text{otherwise}
\end{aligned}
 \right.
\end{equation*}

\begin{definition}
A weighted graph $\m G:=(\m V,\m E,\rho,\mu)$ is called outward locally finite if its outdegree function satisfies
\[
\m{deg}^+(\m v):=\sum_{\m e\in \m E}\eta_{\m{ve}}^+ \rho(\m e)\leq \m M_{\m v}^+\quad
\text{for all }\m v\in\m V\text{ and some }\m M_{\m v}^+>0.\]

It is called inward locally finite if its indegree function satisfies
\[
\m{deg}^-(\m v):=\sum_{\m e\in\m E}\eta_{\m{ve}}^- \rho(\m e)\leq\m M_{\m v}^-\quad
\text{for all }\m v\in V\text{ and some }\m M_{\m v}^->0.\]

It is locally finite if it is both inward and outward locally finite, i.e., if its degree function satisfies
\[
\m{deg}(\m v):= \m {deg}^+(\m v)+\m{deg}^-(\m v)\leq\m M_{\m v}\quad\text{ for all }\m v\in\m V \text{ and some }\m M_{\m v}.
\]
\end{definition}

\begin{example}
If $\m G$ is unweighted, then it is locally finite if and only if each node has only finitely incident edges.

\end{example}

Throughout this paper we suppose that $\m G:=(\m V,\m E,\rho,\mu)$ is a locally finite graph such that $\rho(\m e)>0$ for all $\m e\in\m E$, $\mu(\m v)>0$ for all $\m v\in\m V$ and $\mu(\m V)<\infty$. Let $p\in[1,+\infty)$, we denote by $\ell^p(\m E,\rho)$ the space of all functions $\varphi:\m E\rightarrow\mathbb R$ such that 
\[
\|\varphi\|_{\ell^p(\m E,\rho)}:=\left(\sum_{\m e\in\m E}|\varphi(\m e)|^p\rho(\m e)\right)^{1/p}<\infty
\]or else
\[
\|\varphi\|_{\ell^{\infty}(\m E,\rho)}:=\sup_{\m e\in\m E}|\varphi(\m e)|\rho(\m e)<\infty
\]For $p=2$,  $\ell^2(\m E,\rho)$ is a Hilbert space endowed with the inner product
\[
\la \varphi_1,\varphi_2\ra_\rho=\sum_{\m e\in\m E}\rho(\m e)\varphi_1(\m e)\varphi_2(\m e)
\] 
Similarly, we denote by $\ell^p(\m V,\mu)$ the space of all functions $\phi:\m V\rightarrow\mathbb R$ such that 
\[
\|\phi\|_{\ell^p(\m V,\mu)}:=\left(\sum_{\m v\in\m V}|\phi(\m v)|^p\mu(\m v)\right)^{1/p}<\infty
\]or else
\[
\|\phi\|_{\ell^\infty(\m V,\mu)}:=\sup_{\m v\in\m V}|\phi(\m v)|\mu(\m v)<\infty
\]
For $p=2$,  $\ell^2(\m V,\mu)$ is a Hilbert space endowed with the inner product
\[
\la \phi_1,\phi_2\ra_\mu=\sum_{\m v\in\m V}\mu(\m v)\phi_1(\m v)\phi_2(\m v)
\] 
We simply write $\ell^p(\m E)$ if $\rho\equiv 1$ and $\ell^p(\m V)$ if $\mu\equiv 1$.
Define 
\[
(\mathcal I^T \phi)(\m e)=\phi(\m e_{+})-\phi(\m e_{-}),\quad \phi\in\mathbb R^{\m V},\,\m e\in\m E
\] The difference operator $\mathcal I^T \phi$ can be looked as a discretized version of the first derivative of a function $\phi$ defined in all points of $G$. This plays a relevant role in the development of functional analysis on graphs. For $p\in [0,+\infty[$ we define the discrete Sobolev spaces of order one by 
\[
\m W^{1,p}_{\rho,\mu}(\m V)=\{\phi\in \ell^p(\m V,\mu):\,\mathcal I^T \phi\in \ell^p(\m E,\rho)\}
\]
The space $\m W^{1,p}_{\rho,\mu}(\m V)$ is a Banach space endowed with the norm
\[
\|\phi\|_{\m W^{1,p}_{\rho,\mu}}=\|\phi\|_{\ell^p(\m V,\mu)}+\|\mathcal I^T\phi\|_{\ell^p(\m E,\rho)}
\]and a Hilbert space for $p=2$ endowed with the inner product
\[
\la \phi,\psi\ra_{\rho,\mu}=\sum_{\m v\in \m V}\mu(\m v)\phi(\m v)\psi(\m v)+\sum_{\m e\in\m E}\rho(\m e)\left(\phi(\m e_{+})-\phi(\m e_{-})\right)\left(\psi(\m e_{+})-\psi(\m e_{-})\right)
\]

Recall that the distance $\m{dist}_\rho(\m v,\m w)$ of two nodes $\m v,\,\m w$ is defined as the infimum of the lengths of all paths from $\m v$ to $\m w$. In this way $\m G$, to be more precise, $\m V$ becomes  a metric space which is not complete in general unless $\rho$ is uniformly bounded away from $0$, i.e., $\frac{1}{\rho}\in \ell^{\infty}$. The ball of radius $r>0$ and center $\m v_0$ with respect to $\m{dist}_\rho$ is defined by
\[
\m B_\rho(\m v_0,r):=\{\m w\in\m V:\,\,\m{dist}_\rho(\m v_0,\m w)<r\}.
\]

If $\m G$ is connected, then by Proposition 38 (1) in \cite{M14}, the space $W^{1,p}_{\rho,\mu}(\m V)$ is densely and continuously embedded in $\ell^p(\m V,\mu)$ for all $1\leq p\leq\infty$. Let additionally $p<\infty$, then by Proposition 38 (2) in \cite{M14} this embedding is compact if for $\epsilon>0$ there are $\m v\in\m V$ and $r>0$ such that
\begin{enumerate}
\item[(i) ] $\m B_\rho(\m v,r)$ is a finite set
\item[(ii) ] there holds
\begin{equation}\label{rho}
\sum_{\m w\notin \m B_\rho(\m v,r)}|\phi(\m w)|^p\mu(\m w)<\epsilon^p
\end{equation}for all $\phi$ in the unit ball of $W^{1,p}_{\rho,\mu}(\m V)$.
\end{enumerate}

\begin{remark}
\begin{enumerate}
\item Condition (i) is satisfied if $\rho$ is uniformly bounded from below away from $0$, and in particular \eqref{rho} holds.
\item For all $\epsilon>0$, condition (ii) is satisfied for $r>\m{vol}_\rho(\m G):=\sum_{\m e\in\m E}\rho(\m e)$ if $\m{vol}_\rho(\m G)$ is finite.
\end{enumerate}
\end{remark}  
The space $\m W^{1,p}_{0,\rho,\mu}(\m V)$ is the closure of the space $\m C_{0}(\m V)$ of finitely supported functions on $\m V$ in the norm of $\m W^{1,p}_{\rho,\mu}(\m V)$ ($\m C_{0}(\m V)$ plays the role of test functions). For $1\leq p\leq \infty$, $\m W^{1,p}_{0,\rho,\mu}(\m V)$ is a Banach space with respect to the norm of $\m W^{1,p}_{\rho,\mu}(\m V)$, and a Hilbert space for $p=2$. For $1\leq p<\infty$ it is continuously and densely embedded into  $\ell^p(\m V,\mu)$. If $1\leq p<\infty$, then it is separable in $\ell^p(\m V,\mu)$ and if $1<p<\infty$ it is uniformly convex and hence reflexive \cite{M14}.

\subsection{Abstract surjectivity result}

Let $E$ be a reflexive Banach space with its dual $E^*$ and $A:D(A)\subset E\rightarrow 2^{E^*}$ be a multivalued function, where $D(A)=\{u\in E:\, Au\neq\emptyset\}$, stands for the domain of $A$. We say that $A$ is \textit{monotone} if $\la u^*-v^*,u-v\ra_{E^*\times E}\geq 0$ for all $u^*\in Au$,\, $v^*\in Av$  and  $u,\,v\in D(A)$. If moreover, $A$ has a maximal graph in the sense of inclusion among all monotone operators, then we say that $A$ is maximal monotone. We say that $A$ is \textit{pseudomonotone} operators if it satisfies the following properties,
\begin{enumerate}
\item[(a)] for each $u\in E$, the set $Au$ is nonempty, closed and convex in $E^*$.
\item[(b)] $A$ is upper semicontinuous from each finite dimensional subspace of $E$ into $E^*$ endowed with its weak topology;
\item[(c)] if $u_n\rightarrow u$ weakly weakly in $E$, $u_n^*\in Au_n$ and $\limsup_{n\to\infty}\limits\,\la u_n^*,u_n-u\ra_{E^*\times E}\leq 0$, then for each $v\in E$ there exists $v^*\in Au$ such that $\la v^*,u-v\ra_{E^*\times E}\leq \liminf_{n\to\infty}\limits\,\la u_n^*,u_n-v\ra_{E^*\times E} $.
\end{enumerate}
For a linear, maximal monotone operator $L:D(L)\subset E\rightarrow E^*$, an operator  $A$ is said to be \textit{pseudomonotone with respect to $D(L)$}(or $L-$pseudomonotone) if $(a)$ and $(b)$ are satisfied and
\begin{enumerate}
\item[(c')] for each sequences $\{u_n\}\subset D(L)$ and $\{u_n^*\}\subset E^*$ with $u_n\rightarrow u$  weakly in  $E$, $Lu_n\rightarrow Lu$  weakly in  $E^*$,  $u_n^*\in Au_n$  for all $n\in\mathbb N$, $u_n^*\rightarrow u^*$ weakly in $E^*$ and  $\displaystyle\limsup_{n\to+\infty}\la u_n^*,u_n-u\ra_{E^*\times E}\leq 0$, we have $u^*\in Au$ and $\displaystyle\lim_{n\to+\infty}\la u_n^*,u_n\ra_{E^*\times E}=\la u^*,u\ra_{E^*\times E}$.
\end{enumerate}

$A$ is \textit{coercive}  if there exists a function $c:\mathbb R^+\rightarrow\mathbb R$ with $c(r)\rightarrow\infty$ as $r\to\infty$ such that $\la u^*,u\ra_{E^*\times E}\geq c(\|u\|_E)\|u\|_E$ for every $(u,u^*)\in\mathrm{Graph}(A)$.

%For a single-valued operator $A:E\rightarrow E^*$, we say that $A$ is \textit{demicontinuous} if it is continuous from $E$ to $E^*$ endowed with weak topology and $A$ \textit{pseudomonotone} if for each sequence $\{u_n\}\subset E$ such that it converges weakly to $u\in E$ and $\displaystyle\limsup_{n\to\infty}\la Au_n,u_n-u\ra_{E^*\times E}\leq 0$, we have $\la Av,u-v\ra_{E^*\times E}\leq \displaystyle\liminf_{n\to\infty}\la A u_n,u_n-v\ra_{E^*\times E}$ for all $v\in E$.
Now let $f:E\rightarrow \ol{\mathbb R}:=\mathbb R\cup\{+\infty\}$ be a proper, convex and lower semicontinuous functional. The mapping $\partial_c f:E\rightarrow 2^{E^*}$ defined by 
\[
\partial_cf(u)=\{u^*\in E^*:\,\la u^*,v-u\ra_{E^*\times E}\leq f(v)-f(u)\text{ for all }v\in E \},
\]is called the subdifferential of $f$. Any element $u^*\in \partial_cf(u)$ is called a subgradient of $f$ at $u$. It is a well know fact that $\partial f_c$ is a maximal monotone operator.

Let $F:E\rightarrow\mathbb R$ be a locally Lipschitz continuous functional and $u,\,v\in E$. We denote by $F^\circ(u;v)$ the generalized Clarke directional derivative of $F$ at the point $u$ in the direction $v$ defined by
\[
F^\circ(u;v)=\displaystyle\limsup_{w\to u,\,t\downarrow 0}\frac{F(w+tv)-F(w)}{t}.
\]
The generalized Clarke gradient $\partial F:E\rightarrow 2^{E^*}$ of $F$ at $u\in E$ is defined by
\[
\partial F(u)=\{\xi\in E^*:\,\la\xi,v\ra_{E^*\times E}\leq F^\circ(u;v)\text{ for all } v\in E\}.
\]
We collect the following properties
\begin{enumerate}
\item[(a)] the function $v\mapsto F^\circ(u;v)$ is positively homogeneous, subadditive and satisfies
\[
|F^\circ(u;v)|\leq L_u\|v\|_E\text {for all } v\in E,
\]where $L_u>0$ is the rank of $F$ near $u$.
\item[(b)] $(u,v)\mapsto F^\circ(u;v)$ is upper semicontinuous.
\item[(c)] $\partial F(u)$ is a nonempty, convex and weakly$^*$ compact subset of $E^*$ with $\|\xi\|_{E^*}\leq L_u$ for all $\xi\in\partial F(u)$.
\item[(d)] for all $v\in E$, we have $F^\circ(u;v)=\max\{\la\xi,v\ra_{E^*\times E}:\,\xi\in\partial F(u)\}$.
%\item[(e)] Let $B$ be another Banach spaces and $\mathfrak t\in\mathcal L(B,E)$. Then
%\begin{enumerate}
%\item[(i)] $(F\circ \mathfrak t)^\circ(u;v)\leq F^\circ(\mathfrak tu;\mathfrak tv)$ for $u,\,v\in E$.
%\item[(ii)] $\partial(F\circ \mathfrak t)(u)\subseteq \mathfrak t^*\partial F(\mathfrak tu)$ for $u\in E$ and where $\mathfrak t^*\in\mathcal L(E^*,B^*)$ denotes the adjoint operator to $\mathfrak t$.
%\end{enumerate}

\end{enumerate}

We say that a function $F:E\rightarrow \mathbb R$ is regular at $x$, if for all $v$, the usual one-sided directional derivative \[F'(x,v):=\displaystyle\lim_{h\downarrow0}\frac{F(x+hv)-F(x)}{h}\] exists and is equal to the generalized directional derivative $F^\circ(x;v)$. By Proposition 2.3.6 in \cite{C90}, if $F$ is locally Lipschitz and convex, then it is regular at any $x$.

The following surjectivity result for operators which are $L-$pseudomonotone will be used in our existence theorems in Section 3 and Section 4 (cf. \cite[Theorem 2.1]{PPR99}).
\begin{theorem}\label{thm0}
If $E$ is a reflexive strictly convex Banach space, $L:D(L)\subset E\rightarrow E^*$ is a linear maximal monotone operator, and $A:E\rightarrow 2^{E^*}$ is a multivalued operator , which is bounded, coercive and $L-$pseudomonotone. Then $L+A$ is a surjective operator, i.e. for all $f\in E^*$, there exists $u\in E$ such that $Lu+Au\ni f$. 
\end{theorem}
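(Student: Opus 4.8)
Since this statement is taken from \cite[Theorem 2.1]{PPR99}, it is invoked directly in the paper; were one to prove it, the plan is to follow the classical scheme for pseudomonotone perturbations of a linear maximal monotone operator (Browder, Lions; see also Berkovits--Mustonen): solve a family of finite-dimensional approximate equations keeping the unbounded part $L$ exact, extract uniform a priori bounds from coercivity, and pass to a weak limit in which the $L$-pseudomonotonicity condition (c$'$) does the essential work.

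\emph{Reduction, approximation, a priori bounds.} Fix $f \in E^*$. Replacing $A$ by $A - f$ --- still bounded, $L$-pseudomonotone, and coercive with $c(r)$ replaced by $c(r) - \|f\|_{E^*}$ --- it suffices to find $u \in D(L)$ with $0 \in Lu + Au$. A linear maximal monotone operator is densely defined and closed, so $\mathrm{Graph}(L)$ is a closed linear, hence weakly closed, subspace of $E \times E^*$, and $\langle Lv, v\rangle \ge 0$ for all $v \in D(L)$. I would pick an increasing sequence of finite-dimensional subspaces $E_n \subset D(L)$ with $\bigcup_n E_n$ dense in $D(L)$ for the graph norm (hence dense in $E$); in the abstract setting this uses separability of $E$, which holds in all our applications, and otherwise the finite-dimensional step is replaced by a topological-degree argument. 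For each $n$ the multimap $E_n \ni v \mapsto \{(\langle Lv + v^*, w_i\rangle)_{i=1}^n : v^* \in Av\}$ is upper semicontinuous with nonempty convex compact values by (a)--(b) and coercive on $E_n$ because $\langle Lv + v^*, v\rangle \ge \langle v^*, v\rangle \ge c(\|v\|)\|v\|$; a multivalued Brouwer-type argument then produces $u_n \in E_n$ and $u_n^* \in Au_n$ with $\langle Lu_n + u_n^*, w\rangle = 0$ for all $w \in E_n$. Testing with $u_n$ and using $\langle Lu_n, u_n\rangle \ge 0$ forces $c(\|u_n\|_E)\|u_n\|_E \le 0$, hence $\|u_n\|_E \le R$ uniformly; boundedness of $A$ gives $\|u_n^*\|_{E^*} \le C$. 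Along a subsequence $u_n \rightharpoonup u$ in $E$ and $u_n^* \rightharpoonup u^*$ in $E^*$.

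\emph{Limit passage, the main difficulty.} The hard part is to upgrade these weak limits to $Lu + u^* = 0$ with $u^* \in Au$, and this is precisely where (c$'$) enters. I would first show $u \in D(L)$ and $Lu_n \rightharpoonup Lu$ by combining the Galerkin identities $\langle Lu_n, w\rangle = -\langle u_n^*, w\rangle$ (for $w \in E_m$, $n \ge m$) with monotonicity of $L$, with graph-norm approximations $v_n \to v$ of elements of $D(L)$, and with weak closedness of $\mathrm{Graph}(L)$; the delicate point is to control $\langle Lu_n, u_n - v_n\rangle = -\langle u_n^*, u_n - v_n\rangle$ and pass to the limit, which in concrete models (e.g. parabolic problems, where $L = d/dt$) reduces to the uniform boundedness of the time derivatives of the Galerkin solutions and is the technically heaviest step. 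The same manipulation yields $\limsup_n \langle u_n^*, u_n - u\rangle \le 0$, so all hypotheses of (c$'$) hold for $\{u_n\} \subset D(L)$ and $\{u_n^*\}$; therefore $u^* \in Au$ and $\langle u_n^*, u_n\rangle \to \langle u^*, u\rangle$. Finally, letting $n \to \infty$ in $\langle Lu_n + u_n^*, w\rangle = 0$ for $w \in \bigcup_m E_m$ and using density gives $Lu + u^* = 0$ in $E^*$, i.e. $Lu + Au \ni f$ after undoing the reduction. The strict convexity of $E$ is not needed for this argument; it is used only in the variant that regularizes $L$ by its Yosida approximation $L_\lambda$ instead of using a Galerkin basis inside $D(L)$ (there one needs a single-valued duality map, available after an equivalent renorming) and in uniqueness-type refinements.
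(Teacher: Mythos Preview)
You are right that the paper does not supply a proof: Theorem~\ref{thm0} is quoted from \cite[Theorem~2.1]{PPR99} and used as a black box, and the sentence immediately following it about Troyanski renorming is the only comment the authors add. So there is nothing in the paper to compare your sketch against beyond that citation.

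Your outline is a reasonable roadmap, but it has a real gap at exactly the place you flag as ``technically heaviest.'' From the finite-dimensional identities $\langle Lu_n + u_n^*, w\rangle = 0$ for $w \in E_n$ you control only the restriction of $Lu_n$ to $E_n$; the monotonicity inequality $\langle Lu_n - Lv, u_n - v\rangle \ge 0$ does bound $\langle Lu_n, v\rangle$ for each fixed $v \in D(L)$, but the bound depends on $\|Lv\|_{E^*}$, not on $\|v\|_E$, so it does not give $\sup_n \|Lu_n\|_{E^*} < \infty$. Without that uniform bound you cannot extract a subsequence with $Lu_n \rightharpoonup Lu$ in $E^*$, and condition~(c$'$) never becomes applicable. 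In concrete parabolic models the step succeeds because the Galerkin projector essentially commutes with $d/dt$, but in the abstract statement that structure is absent.

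This is precisely why the proof in \cite{PPR99} takes the route you relegate to a side remark: one regularizes $L$ by its Yosida approximation $L_\lambda$, built via the duality map $J:E\to E^*$, solves $L_\lambda u_\lambda + Au_\lambda \ni f$ by the classical pseudomonotone surjectivity theorem, and then lets $\lambda\downarrow 0$. The decisive advantage is that $\|L_\lambda u_\lambda\|_{E^*} = \|f - u_\lambda^*\|_{E^*}$ is bounded \emph{for free} from the equation itself, so one genuinely has $L_\lambda u_\lambda \rightharpoonup Lu$ and (c$'$) applies. Strict convexity is assumed exactly to make $J$ single-valued and demicontinuous; the paper's own remark that strict convexity can be removed only after a Troyanski renorming confirms that this is the intended mechanism. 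So your final sentence has the roles inverted: the Yosida/duality-map variant is the actual argument behind the cited theorem, and the pure Galerkin-in-$D(L)$ scheme is the one that does not close without extra structure.
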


It is worth to mention that one can drop the strict convexity of the reflexive Banach space $E$. It suffices to invoke the Troyanski renorming theorem to get an equivalent norm so that the space itself and its dual are strictly convex(cf. \cite[Proposition 32.23, p.862]{Z90}).

\section{Nonconvex sum functionals on graphs}

In this section we will prove the discrete counterpart of the Aubin-Clarke theorem concerning the subdifferentiability of nonconvex sum functionals. We consider a function $\m j:\mathbb R\rightarrow\mathbb R$ which satisfies the following hypothesis $\m H(\m j)$:

\begin{enumerate}
\item[$\m H(\m j)_1$] $\m j:\mathbb R\rightarrow\mathbb R$ is locally Lipschitz.
\item[$\m H(\m j)_2$] there exists $\alpha_\m j>0$ such that
\[
|z|\leq \alpha_\m j(1+|s|^{}),\qquad \forall z\in\partial \m j(s)
\]
\end{enumerate}

Next we define the superpotential $\m J:\ell^2(\m V,\mu)\rightarrow\mathbb R$ defined by
\[
\m J(\phi)=\sum_{\m v\in\m V}\mu(\m v)\, \m j(\phi(\m v))
\]for all $\phi\in \ell^2(\m V,\mu)$.  The sum functional $\m J$ can be seen as the discrete version of the classical integral functionals. The following results is the discret version of the Aubin-Clarke theorem \cite{C90}.

\begin{proposition}\label{prop1}
Under the assumption $\m H(\m j)$:
\begin{enumerate}
\item The functional $\m J$ is well defined and finite on $\ell^2(\m V,\mu)$.
\item $\m J$ is locally Lipschitz.
\item For all $\phi,\,\psi\in \ell^2(\m V,\mu)$, we have
\begin{equation}\label{eqj0}
\m J^0(\phi;\psi)\leq \sum_{\m v\in\m V}\mu(\m v)\, \m j^0(\phi(\m v);\psi(\m v))
\end{equation}
\item For all $\phi\in \ell^2(\m V,\mu)$ we have
\[
\partial \m J(\phi)\subset \sum_{\m v\in\m V}\mu(\m v)\partial \m j(\phi(\m v))
\]
This inclusion is understood in the sense that for each $\phi^*\in\partial \m J(\phi)\subset \ell^2(\m V,\mu)$, there exists a mapping $\m V\ni\m v\mapsto\xi(\m v)$ such that $\xi(\m v)\in\partial \m j(\phi(\m v))$ and 
\[
\la \phi^*,\psi\ra=\sum_{\m v\in\m V}\mu(\m v)\xi(\m v) \psi(\m v) 
\]for all $\psi\in \ell^2(\m V,\mu)$.
\end{enumerate}
\end{proposition}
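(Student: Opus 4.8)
The plan is to prove the four assertions in the stated order, with (1)--(2) as routine preparation, (3) carrying the essential content, and (4) a short duality argument from (3). Throughout, the main tool is Lebourg's mean value theorem for locally Lipschitz functions combined with the growth bound $\m H(\m j)_2$, which converts information on $\partial\m j$ into pointwise control of $\m j$ and of its difference quotients. For (1), Lebourg's theorem on the segment $[0,s]$ yields $|\m j(s)|\le|\m j(0)|+\alpha_\m j|s|+\alpha_\m j|s|^2$; summing against $\mu$ and using $\mu(\m V)<\infty$ together with $\phi\in\ell^2(\m V,\mu)$ gives $\sum_{\m v\in\m V}\mu(\m v)|\m j(\phi(\m v))|<\infty$, so $\m J$ is well defined and finite. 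For (2), I fix $\phi_0$ and $R$, take $\phi,\psi$ in the ball $B(\phi_0,R)$ of $\ell^2(\m V,\mu)$, and apply Lebourg's theorem node by node to write $\m j(\phi(\m v))-\m j(\psi(\m v))=z_{\m v}(\phi(\m v)-\psi(\m v))$ with $z_{\m v}\in\partial\m j(\eta_{\m v})$, $\eta_{\m v}$ between $\phi(\m v)$ and $\psi(\m v)$; then $|z_{\m v}|\le\alpha_\m j(1+|\phi(\m v)|+|\psi(\m v)|)$ by $\m H(\m j)_2$, and Cauchy--Schwarz in $\ell^2(\m V,\mu)$ gives $|\m J(\phi)-\m J(\psi)|\le\alpha_\m j\,\big(\sum_{\m v}\mu(\m v)(1+|\phi(\m v)|+|\psi(\m v)|)^2\big)^{1/2}\,\|\phi-\psi\|_{\ell^2(\m V,\mu)}$, where the first factor is bounded on $B(\phi_0,R)$ (use $(1+a+b)^2\le 3(1+a^2+b^2)$ and $\mu(\m V)<\infty$). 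Hence $\m J$ is locally Lipschitz, and in particular $\m J^0$ is well defined.

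For (3), I would first observe that the right-hand side of \eqref{eqj0} is an absolutely convergent series: by property (d) of the Clarke gradient and $\m H(\m j)_2$ one has $|\m j^0(\phi(\m v);\psi(\m v))|\le\alpha_\m j(1+|\phi(\m v)|)|\psi(\m v)|$, which is $\mu$-summable by Cauchy--Schwarz. Next I write the difference quotient defining $\m J^0(\phi;\psi)$ as a weighted series $\sum_{\m v}a_{\m v}(\chi,t)$ with $a_{\m v}(\chi,t)=\mu(\m v)\big(\m j(\chi(\m v)+t\psi(\m v))-\m j(\chi(\m v))\big)/t$, and prove a uniform tail bound: Lebourg's theorem and $\m H(\m j)_2$ give $a_{\m v}(\chi,t)\le\alpha_\m j\,\mu(\m v)(1+|\chi(\m v)|+|\psi(\m v)|)|\psi(\m v)|$ whenever $\|\chi-\phi\|_{\ell^2(\m V,\mu)}\le 1$ and $0<t\le 1$, so by Cauchy--Schwarz, for every finite $F\subset\m V$ the tail $\sum_{\m v\notin F}a_{\m v}(\chi,t)$ is at most some $r_F$ independent of $\chi$ and $t$, with $r_F\to 0$ as $F\uparrow\m V$ because $\psi\in\ell^2(\m V,\mu)$. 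Splitting the series at $F$, taking $\limsup_{\chi\to\phi,\,t\downarrow 0}$, using subadditivity of $\limsup$ on the finite part together with the fact that $\chi\to\phi$ in $\ell^2(\m V,\mu)$ forces $\chi(\m v)\to\phi(\m v)$ in $\mathbb R$ (so the nodewise $\limsup$ is $\le\m j^0(\phi(\m v);\psi(\m v))$), and finally letting $F\uparrow\m V$, I obtain \eqref{eqj0}.

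For (4), I invoke the Riesz representation in the Hilbert space $\ell^2(\m V,\mu)$ to write any $\phi^*\in\partial\m J(\phi)$ as $\la\phi^*,\psi\ra=\sum_{\m v}\mu(\m v)\xi(\m v)\psi(\m v)$ for a fixed $\xi\in\ell^2(\m V,\mu)$, and then test the defining inequality $\la\phi^*,\psi\ra\le\m J^0(\phi;\psi)$ together with \eqref{eqj0} against $\psi=r\,\mathbf 1_{\m v_0}$ (the function equal to $r$ at $\m v_0$ and $0$ elsewhere), for arbitrary $\m v_0\in\m V$ and $r\in\mathbb R$; since $\m j^0(s;0)=0$, all terms with $\m v\ne\m v_0$ vanish and one is left with $\xi(\m v_0)r\le\m j^0(\phi(\m v_0);r)$ for all $r\in\mathbb R$, i.e. $\xi(\m v_0)\in\partial\m j(\phi(\m v_0))$. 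As $\m v_0$ is arbitrary, $\xi$ is the required representative. The step I expect to be the main obstacle is the interchange of $\limsup$ with the infinite sum in (3): it is the discrete counterpart of the Fatou/dominated-convergence argument in the classical Aubin--Clarke theorem, and the uniform smallness of the tail is exactly where the linear growth in $\m H(\m j)_2$ has to be matched against the $\ell^2$-summability of the data.
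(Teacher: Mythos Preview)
Your proof is correct, and parts (1)--(2) follow essentially the same Lebourg-plus-H\"older route as the paper. The differences lie in (3) and (4).

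For (3), the paper simply invokes ``Fatou's Lemma with counting measure'' to push the $\limsup$ inside the sum. Your explicit tail-splitting argument is the honest way to do this: it supplies precisely the uniform integrable majorant that a reverse-Fatou step needs (and which the paper leaves implicit). The content is the same, but your version is more self-contained.

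For (4), the approaches are genuinely different. The paper introduces the auxiliary convex functions $\widehat{\m j}(\cdot)=\m j^0(\phi;\cdot)$ and $\widehat{\m J}(\psi)=\sum_{\m v}\mu(\m v)\widehat{\m j}(\psi(\m v))$, observes that $\partial\m J(\phi)\subset\partial_c\widehat{\m J}(0)$, and then cites a convex Aubin--Clarke result (Ioffe--Levin) to get $\partial_c\widehat{\m J}(0)\subset\sum_{\m v}\mu(\m v)\partial_c\widehat{\m j}(0)=\sum_{\m v}\mu(\m v)\partial\m j(\phi(\m v))$. Your argument bypasses this machinery entirely: you use the Hilbert structure of $\ell^2(\m V,\mu)$ to represent $\phi^*$ by a function $\xi$, and then test \eqref{eqj0} against the single-node functions $r\,\mathbf 1_{\m v_0}$ to read off $\xi(\m v_0)\in\partial\m j(\phi(\m v_0))$ directly. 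This is more elementary and exploits the discreteness of the setting (indicator functions of nodes lie in $\ell^2(\m V,\mu)$), at the cost of being specific to the Hilbert case; the paper's convex-analysis detour would transfer to $\ell^p$ or more general Banach settings without change.
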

\begin{proof}
By $\m H(\m j)$, Lebourg's mean value theorem and Hölder's inequality, we have 
\begin{align*}
|\m J(\phi_1)-\m J(\phi_2)|&\leq \sum_{\m v\in \m V}\mu(\m v)\,|\,\m j(\phi_1(\m v))-\m j(\phi_2(\m v))\,|\\
&\leq \sum_{\m v\in \m V}\mu(\m v)|\xi|\,|\phi_1(\m v)-\phi_2(\m v)|\quad(\xi\in\partial\m j(s)\text{ with } s\in[\phi_1(\m v),\phi_2(\m v)])\\
&\leq \alpha_\m j \sum_{\m v\in \m V}\mu(\m v)\left(1+|\phi_1(\m v)|+|\phi_2(\m v)|\right)\,|\phi_1(\m v)-\phi_2(\m v)|\\
&\leq \alpha_\m j \left(\sum_{\m v\in \m V}\mu(\m v)(1+|\phi_1(\m v)|+|\phi_2(\m v)|)^2\right)^{1/2}\|\phi_1-\phi_1\|_{\ell^2(\m V,\mu)}\\
&\leq \alpha_\m J\left(1+\|\phi_1\|_{\ell^2(\m V,\mu)}+\|\phi_2\|_{\ell^2(\m V,\mu)}\right)\,\|\phi_1-\phi_2\|_{\ell^2(\m V,\mu)}\\
&\leq \alpha_\m j' \,\|\phi_1-\phi_2\|_{\ell^2(\m V,\mu)}
\end{align*}where $\alpha_\m J$ depends only on $\alpha_\m j$, $\mu$ and $m$ where $m$ is such that $\|\phi_1\|_{\ell(\m V,\mu)},\,\|\phi_2\|_{\ell(\m V,\mu)}\leq m$. Consequently, the functional $\m J$ is well-defined, finite and locally Lipschitz.

Let $\phi,\,\psi\in \ell^2(\m V,\mu)$, by Fatou's Lemma with counting measure,
\begin{align*}
\m J^0(\phi;\psi)&=\limsup_{\theta\to\phi,\,\lambda\to 0}\frac{\m J(\theta+\lambda\psi)-\m J(\theta)}{\lambda}\\
 &=\limsup_{\theta\to\phi,\,\lambda\to 0}\frac{1}{\lambda}\sum_{\m v\in\m V}\mu(\m v)\left(\m j(\theta(\m v)+\lambda\psi(\m v))-\m j(\theta(\m v))\right)\\
&\leq \sum_{\m v\in\m V}\mu(\m v)\limsup_{\theta\to\phi,\,\lambda\to 0}\frac{\m j(\theta(\m v)+\lambda\psi(\m v))-\m j(\theta(\m v))}{\lambda}\\
&\leq \sum_{\m v\in\m V}\mu(\m v)\m j^0(\phi(\m v);\psi(\m v))
\end{align*}
Now, define $\widehat{\m j}$ and $\widehat{\m J}$ as follows
\begin{equation*}
\widehat{\m j}(\psi)=\m j^0(\phi;\psi),\qquad
\widehat{\m J}(\psi)=\sum_{\m v\in\m V}\mu(\m v)\widehat{\m j}(\psi(\m v)).
\end{equation*}It is clear that $\widehat{\m j}$ is convex and thus, so is $\widehat{\m J}$. If we observe that $\widehat{\m j}(0)=\widehat{\m j}(0)=0$, we have $\widehat{\m J}(\psi)-\widehat{\m J}(0)\geq \la\xi,\psi\ra_{\ell^2(\m V,\mu)}$ for all $\psi$ and $\xi\in\partial\widehat{\m J}(0)$. Since $\partial\widehat{\m J}(0)\subset\sum_{\m v\in\m V}\mu(\m v)\partial\widehat{\m j}(0)$, for convex functions, see \cite{IL72}. Then there exists a map $\m v\mapsto\xi(\m v)$ with $\xi(\m v)\in \partial\widehat{\m j}(0)$ such that for every $\theta\in \ell^2(\m V,\mu)$
\[
\la\xi,\theta\ra_{\ell^2(\m V,\mu)}=\sum_{\m v\in \m V}\mu(\m v)\xi(\m v)\theta(\m v)
\]However, $\partial\widehat{\m j}(0)=\partial\m j(\phi)$, so the result would follow.

\end{proof}

Remark that if either $\m j$ or $-\m j$ is regular, then $\m J$ is also regular and equality \eqref{eqj0} in holds true. In fact, one have
\begin{align*}
\m J^\circ(\phi;\psi) &=\displaystyle\limsup_{\theta\to \phi,h\downarrow 0}\frac{\m J(\theta+h\psi)-\m J(\psi)}{h}\\
\geq &\displaystyle\lim_{h\downarrow 0} \frac{\m J(\phi+h\psi)-\m J(\phi)}{h} \\
=&\displaystyle\lim_{h\downarrow 0} \sum_{\m v\in \m V}\mu(\m v)\frac{\m j(\phi(\m v)+h\psi(\m v))-\m j(\phi(\m v))}{h}\\
=&  \sum_{\m v\in \m V}\mu(\m v)\displaystyle\lim_{h\downarrow 0}\frac{\m j(\phi(\m v)+h\psi(\m v))-\m j(\phi(\m v))}{h}\\
=&  \sum_{\m v\in \m V}\mu(\m v)\m j'(\phi(\m v);\psi(\m v))\\
=& \sum_{\m v\in \m V}\mu(\m v)\m j^\circ(\phi(\m v);\psi(\m v))
\end{align*}which, by Proposition \ref{prop1}, leads to 
\[
\m J^\circ(\phi;\psi)= \sum_{\m v\in \m V}\mu(\m v)\m j^\circ(\phi(\m v);\psi(\m v))
\]
Moreover, $\m J^\circ=\m J'$ since
\begin{align*}
\m J'(\phi;\psi)=&\displaystyle\lim_{h\downarrow 0}\sum_{\m v\in\m V}\mu(\m v)\frac{\m j(\phi(\m v)+h\psi(\m v))-\m j(\phi(\m v))}{h}\\
=& \sum_{\m v\in\m V}\mu(\m v) \m j'(\phi(\m v);\psi(\m v))\\
=& \m J^\circ(\phi;\psi)
\end{align*}

\begin{proposition}\label{prop2}
Under hypothesis $\m H(\m j)$, the following inequalities hold
\[
\m J^0(\phi;\psi)\leq \alpha_\m J\left(1+\|\phi\|^{}_{\ell^2(\m V,\mu)}\right)\|\psi\|_{\ell^2(\m V)},\quad\forall \phi,\,\psi\in \ell^2(\m V,\mu)
\]and 
\[
\|\theta\|_{\ell^2(\m V,\mu)}\leq \alpha_\m J\left(1+\|\phi\|^{}_{\ell^2(\m V,\mu)}\right),\quad \forall \theta\in\partial(\m J_{|\ell^2(\m V,\mu)})(\phi),\,\phi\in \ell^2(\m V,\mu)
\]
\end{proposition}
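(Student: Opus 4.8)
The plan is to obtain both estimates directly from Proposition~\ref{prop1}(3) together with the pointwise growth bound on the one--dimensional generalized directional derivative $\m j^\circ$ furnished by $\m H(\m j)_2$, and then to read off the second estimate from the first by the standard duality argument in the Hilbert space $\ell^2(\m V,\mu)$.

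For the first inequality I would fix $\phi,\psi\in\ell^2(\m V,\mu)$ and start from the pointwise observation that, by property~(d) of the generalized gradient applied to $\m j$ on $\mathbb R$, one has $\m j^\circ(\phi(\m v);\psi(\m v))=\max\{z\,\psi(\m v):z\in\partial\m j(\phi(\m v))\}$ for each $\m v\in\m V$, so that $\m H(\m j)_2$ yields
\[
\m j^\circ(\phi(\m v);\psi(\m v))\le\alpha_\m j\big(1+|\phi(\m v)|\big)\,|\psi(\m v)|.
\]
Inserting this into the inequality \eqref{eqj0} of Proposition~\ref{prop1}(3) and applying the Cauchy--Schwarz inequality with respect to the weight $\mu$ gives
\[
\m J^\circ(\phi;\psi)\le\alpha_\m j\sum_{\m v\in\m V}\mu(\m v)\big(1+|\phi(\m v)|\big)|\psi(\m v)|\le\alpha_\m j\,\big\|\,1+|\phi|\,\big\|_{\ell^2(\m V,\mu)}\,\|\psi\|_{\ell^2(\m V,\mu)}.
\]
Since $\mu(\m V)<\infty$, the triangle inequality in $\ell^2(\m V,\mu)$ bounds $\big\|\,1+|\phi|\,\big\|_{\ell^2(\m V,\mu)}$ by $\mu(\m V)^{1/2}+\|\phi\|_{\ell^2(\m V,\mu)}$, and absorbing the numerical factor $\max\{1,\mu(\m V)^{1/2}\}$ into a constant $\alpha_\m J$ depending only on $\alpha_\m j$ and $\mu(\m V)$ produces exactly the claimed bound (with the understanding that $\|\psi\|_{\ell^2(\m V)}$ in the statement should read $\|\psi\|_{\ell^2(\m V,\mu)}$).

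For the second inequality I would take $\theta\in\partial\m J(\phi)$, which lies in $\ell^2(\m V,\mu)^*\cong\ell^2(\m V,\mu)$. By definition of the Clarke gradient, $\la\theta,\psi\ra_{\ell^2(\m V,\mu)}\le\m J^\circ(\phi;\psi)$ for all $\psi\in\ell^2(\m V,\mu)$; applying this with $\psi$ and with $-\psi$ and invoking the first inequality gives $|\la\theta,\psi\ra_{\ell^2(\m V,\mu)}|\le\alpha_\m J\big(1+\|\phi\|_{\ell^2(\m V,\mu)}\big)\|\psi\|_{\ell^2(\m V,\mu)}$ for every $\psi$. Taking the supremum over the closed unit ball of $\ell^2(\m V,\mu)$ and using that this Hilbert space is self--dual yields $\|\theta\|_{\ell^2(\m V,\mu)}\le\alpha_\m J\big(1+\|\phi\|_{\ell^2(\m V,\mu)}\big)$, as desired.

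I do not expect any serious obstacle here: the result is a routine consequence of Proposition~\ref{prop1} and of Clarke's calculus. The only steps that need a little care are using the maximum formula~(d) for $\m j^\circ$ rather than the crude local--Lipschitz estimate (so that the \emph{global} growth constant $\alpha_\m j$, and not the non-uniform local rank $L_{\phi(\m v)}$, appears), exploiting the finiteness of $\mu(\m V)$ to turn the constant term ``$1$'' inside the sum into a uniform multiple of $1+\|\phi\|_{\ell^2(\m V,\mu)}$, and interpreting $\partial(\m J_{|\ell^2(\m V,\mu)})$ simply as the Clarke gradient of $\m J$ on $\ell^2(\m V,\mu)$, whose local Lipschitz continuity was already recorded in Proposition~\ref{prop1}(2).
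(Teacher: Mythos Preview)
Your proof is correct and follows essentially the same route as the paper's: both start from inequality~\eqref{eqj0} of Proposition~\ref{prop1}, apply the maximum formula for $\m j^\circ$ together with the growth hypothesis $\m H(\m j)_2$, use Cauchy--Schwarz in $\ell^2(\m V,\mu)$, and then deduce the second bound from the first via the dual characterization of the norm. You are also right to flag the typo in the statement (the $\|\psi\|_{\ell^2(\m V)}$ should be $\|\psi\|_{\ell^2(\m V,\mu)}$) and to make explicit the role of $\mu(\m V)<\infty$ in absorbing the constant term.
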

\begin{proof}Let $\phi,\,\psi\in \ell^2(\m V,\mu)$, we have
\begin{align*}
\m J^0(\phi;\psi)  &  \leq \sum_{\m v\in \m V}\mu(\m v)\m j^0(\phi(\m v);\psi(\m v))\\
                    &  = \sum_{\m v\in \m V}\mu(\m v)\max\{\theta.\psi(\m v)\,|\, \theta\in\partial\m j(\phi(\m v))\}\\
                    & = \sum_{\m v\in \m V}\mu(\m v)\max\{|\theta|\,|\psi(\m v)|\,|\, \theta\in\partial\m j(\phi(\m v))\}
\end{align*}
By $\m H(\m j)$, we have
\begin{align*}
 \m J^0(\phi;\psi)   & \leq \alpha_\m j\sum_{\m v\in \m V}\mu(\m v)\left(1+|\phi(\m v)|\right)|\psi(\m v)|\\
                    & \leq \alpha_\m j\sum_{\m v\in \m V}\sqrt{\mu(\m v)}\left(1+|\phi(\m v)|\right)\, \sqrt{\mu(\m v)}|\psi(\m v)|\\
                    & \leq \alpha_\m j\left(\sum_{\m v\in \m V}\mu(\m v)\left(1+|\phi(\m v)|\right)^2\right)^{1/2}\, \|\psi(\m v)\|_{\ell^2(\m V,\mu)}\\
                    & \leq \alpha_\m j'\left(1+\|\phi\|_{\ell^2(\m V,\mu)}\right)\, \|\psi(\m v)\|_{\ell^2(\m V,\mu)}
\end{align*}
This, together with \cite[Proposition 2.1.2]{C90}, yield to
\begin{align*}
\|\theta\|_{\ell^2(\m V,\mu)}&=\sup\{\la\theta,\psi\ra_{\ell^2(\m V,\mu)}\,|\,\|\psi\|_{\ell^2(\m V,\mu)}\leq 1\}\\
&\leq \sup\{\m J^0(\phi;\psi)\,|\,\|\psi\|_{\ell^2(\m V,\mu)}\leq 1\}\\
&\leq \alpha_\m j' \left(1+\|\phi\|_{\ell^2(\m V,\mu)}\right),\quad\text{for }\theta\in\partial\m J_{|\ell^2(\m V,\mu)}(\phi),\,\phi\in \ell^2(\m V,\mu)
\end{align*}

\end{proof}
In what follows we consider a superpotential $\m j$ which  subdifferential is obtained by "filling in the gaps" procedure \cite{R77}. For $(\m v,t)\in \m V\times\mathbb R$, define
\[
\m j(\m v,t)=\int_0^t\beta(\m v,s)\,ds
\]where $\beta:\m V\times\mathbb R\rightarrow\mathbb R$ is a function such that $\beta(\m v,.)$ is measurable for all $\m v\in \m V$ and satisfies the following growth condition
\[
|\beta(\m v,t)|\leq \alpha_\beta(1+|t|)
\]for all $\m v\in \m V$ and a.e $t\in\mathbb R$. Note that $\m j(\m v,.)$ is locally Lipschitz and satisfies a growth condition with eventually different constant.

We present the discrete version of the functional in Section 2 of \cite{C81} which is in the following form
\[
\m J(\phi)=\sum_{\m v\in\m V}\mu(\m v)\int_0^{\phi(\m v)}\beta(\m v,t)\,dt
\]Then $\m J$ is a locally Lipschitz function defined on $\ell^2(\m V,\mu)$.

Let's first describe the "filling in the gaps" procedure. Let $\theta \in L^\infty_{loc}(\mathbb R)$, for $\epsilon>0$ and $t\in \mathbb R$, we define:
\begin{equation*}
\underline{\theta}_\epsilon(t)=\displaystyle\essinf_{|t-s|\leq\epsilon}\theta(s),\quad  \quad \overline{\theta}_\epsilon(t)=\displaystyle\esssup_{|t-s|\leq\epsilon}\theta(s).
\end{equation*}
For a fixed $t\in \mathbb R$, the functions $\underline{\theta}_\epsilon,  \overline{\theta}_\epsilon$ are decreasing and increasing in $\epsilon$, respectively. Let 
\begin{align*}
\underline{\theta}(t)=\lim_{\epsilon \to 0^+} \underline{\theta}_\epsilon(t), \quad\quad \overline{\theta}(t)=\lim_{\epsilon \to 0^+}\overline{\theta}_\epsilon(t),
\end{align*}
and let $\hat{\theta}(t):\mathbb R \to 2^\mathbb R$ be a multifunction defined by
\begin{equation*}
\widehat{\theta}(t)=\left[ \underline{\theta}(t),\overline{\theta}(t) \right]
\end{equation*}
From Chang \cite{C81} we know that a locally Lipschitz
function $j:\mathbb R \to \mathbb R$ can be determined up to an additive constant by the relation $$j(t)=\int_0^t \theta(s)\, d s $$ such that $\partial j(t) \subset \hat{\theta}(t)$ for all $t\in\mathbb R$. If moreover, the limits $\theta(t\pm 0)$ exist for every $t\in\mathbb R$, then $\partial j(t) = \hat{\theta}(t)$.

Now by Proposition \ref{prop1}, we have
\[
\partial\m J(\phi)\subset \sum_{\m v\in\m V}\mu(\m v)\hat\beta(\m v,\phi(\m v))
\]If we note $\underline{\m J}(\phi)= \sum_{\m v\in\m V}\mu(\m v)\underline{\beta}(\m v,\phi(\m v))$ and $\overline{\m J}(\phi)= \sum_{\m v\in\m V}\mu(\m v)\overline{\beta}(\m v,\phi(\m v))$ we can write 
\begin{equation}\label{eqJ}
\partial\m J(\phi)\subset [\underline{\m J}(\phi),\overline{\m J}(\phi)],\quad\text{for all }\phi\in\ell^2(\m V,\mu)
\end{equation}If $\m J$ is convex, then it is regular and equality in \eqref{eqJ} holds true.

\section{Existence Result for elliptic problem}

Let $\m G=(\m V,\m E,\rho,\mu)$ be a weighted direct graph. Let $\m j:\mathbb R\rightarrow\mathbb R$ be a locally Lipschitz function and $\m j^0(.;.)$ denotes its generalized directional derivative. Consider a function $\gamma:\m E\rightarrow(0,\infty)$ such that $\gamma(\m e)=\gamma(\bar{\m e})$ and a function $\kappa:\m V\rightarrow (0,\infty)$ such that the following assumption $\m H(\m G)$ hold:
\begin{enumerate}
\item[$(\m G_1)$] There exist $\underline\alpha_\gamma,\,\overline\alpha_\gamma>0$ such that \[\underline\alpha_\gamma\rho(\m e)\leq \gamma(\m e)\leq \overline\alpha_\gamma\rho(\m e),\quad\text{for all }\m e\in\m E.\]
\item[$(\m G_2)$] There exist $\underline\alpha_\mu,\,\overline \alpha_\mu>0$ such that \[\underline\alpha_\mu\kappa(\m v)\leq \mu(\m v)\leq \overline\alpha\mu\kappa(\m v),\quad\text{for all }\m v\in\m V.\]
\end{enumerate}

We denote
$\m W_0:=\m W^{1,2}_{0,\rho,\mu}(\m V)$ and we define the operator $\mathscr L_{\gamma,\kappa}^{\m G}:\m W_0\rightarrow\m W_0$ by 
\[
(\LG \phi)(\m v):=\frac{1}{\mu(\m v)}\sum_{\substack{\m w\in\m V\\\m w\sim \m v}}\gamma(\m v,\m w)(\phi(\m v)-\phi(\m w))+\frac{\kappa(\m v)}{\mu(\m v)}\phi(\m v),\quad\m v\in\m V,\, \phi\in \m W_0
\]

The aim of this paper is to prove the existence of solutions to the  problem of finding $\phi$ such that

\begin{equation}\label{prob0}
\LG \phi+\partial \m J (\phi)\ni \m f,\quad
\phi\in \m W_0
\end{equation}
which is equivalent to finding $\phi\in \m W_0$ such that
\begin{equation}\label{prob1}
  \left\{
  \begin{aligned}
&\LG \phi+\xi = \m f,\\[0.15cm]
&\xi\in \partial \m J (\phi).%\,\, \phi\in \m W_0
\end{aligned}
 \right.
\end{equation}

To obtain a variational formulation of Problem \eqref{prob1}, we multiply by $\psi-\phi$ and we use the definition of $\partial J$. This produces the following hemivariational inequality: Find $\phi\in \m W_0$ such that for every $\psi\in \m W_0$
\begin{equation}\label{prob3}
\langle \LG \phi-\m f,\psi-\phi\rangle +\sum_{\m v\in\m V}\mu\,(\m v)\m j^0(\phi(\m v);\psi(\m v)-\phi(\m v))\geq 0
\end{equation}
Note that problems \eqref{prob0} and \eqref{prob3} are not equivalent. This will be the case if, for example, either $\m j$ or $-\m j$ is regular. Generally, if $\phi$ is a solution of Problem \eqref{prob00}, then it is a solution of Problem \eqref{prob3}.

\begin{definition}
We say that $\phi\in \m W_0$ is a weak solution to problem \eqref{prob0}, if
\[
 \sum_{\m v\in \m V} \mu(\m v)(\LG \phi)(\m v)(\psi(\m v)-\phi(\m v)) +\sum_{\m v\in\m V}\mu(\m v)\m j^0(\phi(\m v);\psi(\m v)-\phi(\m v))\geq \sum_{\m v\in \m V}\mu(\m v)\m f(\m v)(\psi(\m v)-\phi(\m v))
\]for all $\psi\in \m W_0$.

\end{definition}

\begin{theorem}\label{smallness}
Under assumptions $\m H(\m G)$ and $\m H(\m j)$ with
\begin{equation}
\alpha_\m J<\frac{1}{2}\underline\alpha_\gamma\wedge\underline\alpha_\mu,
\end{equation}the problem \eqref{prob3} has at least one weak solution.
\end{theorem}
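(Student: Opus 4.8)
The plan is to recast the hemivariational inequality \eqref{prob3} as the multivalued operator inclusion \eqref{prob0} in the Gelfand triple $\m W_0\hookrightarrow\ell^2(\m V,\mu)\hookrightarrow\m W_0^*$ and to solve the latter by the abstract surjectivity result, Theorem \ref{thm0}, applied with $L=0$ (so that $L$-pseudomonotonicity is plain generalized pseudomonotonicity). Let $i:\m W_0\hookrightarrow\ell^2(\m V,\mu)$ be the canonical continuous dense embedding, $i^*:\ell^2(\m V,\mu)\to\m W_0^*$ its adjoint, let $\Lambda:\m W_0\to\m W_0^*$ be the bounded linear operator with $\la\Lambda\phi,\psi\ra=\sum_{\m v\in\m V}\mu(\m v)(\LG\phi)(\m v)\psi(\m v)$, and set $A\phi:=\Lambda\phi+i^*\,\partial\m J(i\phi)$. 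First I would note that any solution of $A\phi\ni\m f$ is a weak solution of \eqref{prob3}: if $\Lambda\phi+i^*\xi=\m f$ with $\xi\in\partial\m J(i\phi)$, then for every $\psi\in\m W_0$ the definition of the Clarke subgradient together with Proposition \ref{prop1}(3) gives
\begin{align*}
\la\Lambda\phi-\m f,\psi-\phi\ra&=-\la\xi,i(\psi-\phi)\ra_{\ell^2(\m V,\mu)}\\
&\ge-\m J^0\big(i\phi;i(\psi-\phi)\big)\ge-\sum_{\m v\in\m V}\mu(\m v)\,\m j^0\big(\phi(\m v);\psi(\m v)-\phi(\m v)\big),
\end{align*}
which is precisely \eqref{prob3}.

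To apply Theorem \ref{thm0} I would verify its hypotheses. The space $\m W_0$ is a Hilbert space, hence reflexive and strictly convex, and $L=0$ is linear and maximal monotone. $A$ is bounded: $\Lambda$ is bounded by the upper bounds in $(\m G_1)$ and $(\m G_2)$, and $i^*\partial\m J(i\cdot)$ is bounded by Proposition \ref{prop2} and the continuity of $i,i^*$. For coercivity, take $\eta=\Lambda\phi+i^*\xi\in A\phi$; then $\la\eta,\phi\ra=\la\Lambda\phi,\phi\ra+\la\xi,i\phi\ra_{\ell^2(\m V,\mu)}$, where $(\m G_1)$, $(\m G_2)$ and the equivalence of $\|\cdot\|_{\m W_0}$ with $\|\cdot\|_{\ell^2(\m V,\mu)}+\|\mathcal I^T\cdot\|_{\ell^2(\m E,\rho)}$ give $\la\Lambda\phi,\phi\ra\ge\frac12(\underline\alpha_\gamma\wedge\underline\alpha_\mu)\|\phi\|_{\m W_0}^2$, while Proposition \ref{prop2} and $\|i\phi\|_{\ell^2(\m V,\mu)}\le\|\phi\|_{\m W_0}$ give $\la\xi,i\phi\ra_{\ell^2(\m V,\mu)}\ge-\alpha_\m J\|\phi\|_{\m W_0}-\alpha_\m J\|\phi\|_{\m W_0}^2$. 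Hence $\la\eta,\phi\ra\ge\big(\frac12(\underline\alpha_\gamma\wedge\underline\alpha_\mu)-\alpha_\m J\big)\|\phi\|_{\m W_0}^2-\alpha_\m J\|\phi\|_{\m W_0}$, which is coercive precisely because $\alpha_\m J<\frac12\,\underline\alpha_\gamma\wedge\underline\alpha_\mu$; this is the only place the smallness hypothesis is used.

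The core of the argument, and the step I expect to be the main obstacle, is the generalized pseudomonotonicity of $A$. Since $\Lambda$ is bounded, linear and monotone it is pseudomonotone, and since a finite sum of bounded pseudomonotone operators is pseudomonotone it suffices to treat $\mathcal N:=i^*\,\partial\m J(i\cdot)$. Its values are nonempty, closed and convex (weak$^*$ compact, by the properties of the Clarke gradient), and the finite-dimensional upper semicontinuity into $\m W_0^*$ with the weak topology follows from upper semicontinuity of $(\phi,\psi)\mapsto\m J^0(\phi;\psi)$ and property (d) of the generalized gradient recalled in Section 2. The remaining condition — $\phi_n\rightharpoonup\phi$ in $\m W_0$, $\xi_n\in\partial\m J(i\phi_n)$, $i^*\xi_n\rightharpoonup\eta^*$ in $\m W_0^*$, $\limsup_n\la i^*\xi_n,\phi_n-\phi\ra\le0$ $\Rightarrow$ $\eta^*\in\mathcal N(\phi)$ and $\la i^*\xi_n,\phi_n\ra\to\la\eta^*,\phi\ra$ — is where I would use the \emph{compactness} of the embedding $i$ (Proposition 38 (2) in \cite{M14}, available under the conditions recalled in Section 2): then $\phi_n\rightharpoonup\phi$ in $\m W_0$ forces $i\phi_n\to i\phi$ strongly in $\ell^2(\m V,\mu)$; Proposition \ref{prop2} bounds $\{\xi_n\}$ in $\ell^2(\m V,\mu)$, so $\xi_n\rightharpoonup\xi$ along a subsequence; sequential closedness of the graph of $\partial\m J$ in the strong$\times$weak topology of $\ell^2(\m V,\mu)\times\ell^2(\m V,\mu)$ (upper semicontinuity of the Clarke gradient) gives $\xi\in\partial\m J(i\phi)$, i.e. $\eta^*=i^*\xi\in\mathcal N(\phi)$; and pairing the weakly convergent $\xi_n$ against the strongly convergent $i\phi_n$ yields $\la i^*\xi_n,\phi_n\ra=\la\xi_n,i\phi_n\ra_{\ell^2(\m V,\mu)}\to\la\xi,i\phi\ra_{\ell^2(\m V,\mu)}=\la\eta^*,\phi\ra$.

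Collecting these facts, $A$ is bounded, coercive and $L$-pseudomonotone with $L=0$, so Theorem \ref{thm0} furnishes $\phi\in\m W_0$ with $\LG\phi+\partial\m J(\phi)\ni\m f$, which by the first paragraph is a weak solution of \eqref{prob3}. The genuinely delicate parts are the generalized pseudomonotonicity of the multivalued, nonsmooth term $\mathcal N$ — resting on the compact embedding $\m W_0\hookrightarrow\ell^2(\m V,\mu)$ and, through Proposition \ref{prop2}, on the growth condition $\m H(\m j)_2$ — and the tracking of constants in the coercivity estimate, out of which the threshold $\frac12\,\underline\alpha_\gamma\wedge\underline\alpha_\mu$ arises.
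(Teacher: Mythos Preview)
Your proposal is correct and follows the same approach as the paper: both recast \eqref{prob3} as the inclusion \eqref{prob0} and solve it via the surjectivity theorem for bounded, coercive, pseudomonotone multivalued operators, with identical coercivity estimates (yielding the threshold $\frac12\,\underline\alpha_\gamma\wedge\underline\alpha_\mu$) and with the compact embedding $\m W_0\hookrightarrow\ell^2(\m V,\mu)$ doing the work in the pseudomonotonicity step. The only cosmetic difference is that you verify pseudomonotonicity of $\mathcal N=i^*\partial\m J(i\cdot)$ separately and then invoke the sum-of-pseudomonotone lemma, whereas the paper checks condition (c) for $\LG+\partial\m J$ directly, using the strong monotonicity of $\LG$ to upgrade $\phi_k\rightharpoonup\phi$ in $\m W_0$ to strong convergence before passing to the limit.
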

\begin{proof}We observe first that 
\begin{align*}
\la \LG\phi,\psi \ra &= \sum_{\m v\in \m V}\mu(\m v)(\LG\phi)(\m v)\psi(\m v)\\
&= \sum_{\m v\in \m V} \sum_{\substack{\m w\in\m V\\\m w\sim \m v}}\gamma(\m v,\m w)(\phi(\m v)-\phi(\m w))\psi(\m v)+\sum_{\m v\in \m V}\kappa(\m v)\phi(\m v)\psi(\m v) \\
&= \frac{1}{2}\sum_{(\m v,\m w)\in \m E} \gamma(\m v,\m w)(\phi(\m v)-\phi(\m w))(\psi(\m v)-\psi(\m w))+\sum_{\m v\in \m V}\kappa(\m v)\phi(\m v)\psi(\m v)
\end{align*}
From one side, we have
\begin{align}\label{ineqLG1}\nonumber
\la \LG\phi,\phi\ra &=  \frac{1}{2}\sum_{(\m v,\m w)\in \m E} \gamma(\m v,\m w)(\phi(\m v)-\phi(\m w))^2+\sum_{\m v\in \m V}\kappa(\m v)\phi^2(\m v) \\\nonumber
&=\frac{1}{2}\sum_{(\m v,\m w)\in \m E} \frac{\gamma(\m v,\m w)}{\rho(\m v,\m w)}\rho(\m v,\m w)(\phi(\m v)-\phi(\m w))^2+\sum_{\m v\in \m V}\frac{\kappa(\m v)}{\mu(\m  v)}\mu(\m v)\phi^2(\m v)\\
&\geq \frac{1}{2}\underline\alpha_\gamma\wedge\underline\alpha_\mu\,\|\phi\|_{\m W_0}^2
\end{align}Thus, $\LG$ is strongly monotone and coercive. From another side
\begin{align}\label{ineqLG2}\nonumber
\la \LG\phi,\psi\ra & =  \frac{1}{2}\sum_{(\m v,\m w)\in \m E} \frac{\gamma(\m v,\m w)}{\rho(\m v,\m w)}\sqrt{\rho(\m v,\m w)}(\phi(\m v)-\phi(\m w))\sqrt{\rho(\m v,\m w)}(\psi(\m v)-\psi(\m w))\\\nonumber
&\qquad\qquad\qquad\qquad+\sum_{\m v\in \m V}\frac{\kappa(\m v)}{\mu(\m v)}\sqrt{\mu(\m v)}\phi(\m v)\sqrt{\mu(\m v)}\psi(\m v)   \\\nonumber
& \leq \frac{1}{2}\overline\alpha_\gamma \|\mathcal I^T\phi\|_{\ell^2(\m E,\rho)} \|\mathcal I^T\psi\|_{\ell^2(\m E,\rho)} +  \overline\alpha_\mu \|\phi\|_{\ell^2(\m V,\mu)}\|\psi\|_{\ell^2(\m V,\mu)}\\
&\leq \frac{1}{2}\overline\alpha_\gamma\vee\overline\alpha_\mu\,\|\phi\|_{\m W_0}\|\psi\|_{\m W_0}
\end{align}
Thus, $\LG$ is continuous.

\textbf{Claim 1:} The operator $\LG+\partial\m J$ is coercive.

We have
\begin{align*}
\inf\{\la \LG\phi+\xi,\phi\ra\,|\,\xi\in\partial\m J(\phi)\}&= \la \LG\phi,\phi\ra+\inf\{\la\xi,\phi\ra_{\ell^2(\m V,\mu )}\,|\,\xi\in\partial\m J(\phi)\}\\
&\geq \frac{1}{2}\underline\alpha_\gamma\wedge\underline\alpha_\mu\|\phi\|_{\m W_0}^2-\sup\{\|\xi\|_{\ell^{2}(\m V,\mu)}\,|\,\xi\in\partial \m J(\phi)\}\|\phi\|_{\ell^2(\m V,\mu)}\\
&\geq \frac{1}{2}\underline\alpha_\gamma\wedge\underline\alpha_\mu\|\phi\|_{\m W_0}^2-\alpha_\m J\|\phi\|_{\ell^2(\m V,\mu)}-\alpha_\m J\|\phi\|^2_{\ell^p(\m V,\mu)}\\
&\geq \frac{1}{2}\underline\alpha_\gamma\wedge\underline\alpha_\mu\|\phi\|_{\m W_0}^2-\alpha_\m J\|\phi\|_{\m W_0}-\alpha_\m J\|\phi\|^2_{\m W_0}\\
&\geq \left(\frac{1}{2}\underline\alpha_\gamma\wedge\underline\alpha_\mu-\alpha_\m J\right)\|\phi\|_{\m W_0}^2-\alpha_\m J \|\phi\|_{\m W_0}.
\end{align*}
If $\frac{1}{2}\underline\alpha_\gamma\wedge\underline\alpha_\mu>\alpha_\m J$, the above inequality implies that $\LG+\partial \m J: \m W_0\rightarrow \m W_0^*$ is coercive.

\textbf{Claim 2:} The operator $\LG+\partial\m J$ is pseudomonotone. 

We know that $\partial\m J$ is nonempty, convex, weak-compact subset of $\m W_0$. Then for each $\phi\in \m W_0$, $\LG(\phi)+\partial\m J(\phi)$ is nonempty, bounded, closed and convex subset of $\m W_0$. Moreover, $\LG(\phi)+\partial\m J(\phi)$ is upper semicontinuous from $\m W_0$ to $w-\m W_0$.

Let $\phi_k$ be a sequence in $\m W_0$ converging weakly to $\phi$, and $\xi_k\in\partial\m J(\phi_k)$ such that
\[
\limsup_{k\to\infty}\la \LG(\phi_k)+\xi_k,\phi_k-\phi \ra\leq 0
\]which implies
\begin{equation}\label{ineq1}
\limsup_{k\to\infty}\la \LG(\phi_k),\phi_k-\phi \ra+\liminf_{k\to\infty}\la\xi_k,\phi_k-\phi \ra\leq 0
\end{equation}
The embedding $\m W_0\hookrightarrow \ell^2(\m V,\mu)$ is compact. Therefore, $\phi_k$ converges strongly in $\ell^2(\m V,\mu)$ to $\phi$. By applying Theorem 2.2 in \cite{C81}, we have
\[
\partial\left(\m J_{|\m W_0}\right)(\phi)\subset \partial\left(\m J_{|\ell^2(\m V,\mu)}\right)(\phi),\quad\forall \phi\in \m W_0
\]Therefore,
\[
|\la \theta_k,\phi_k-\phi\ra_{\m W_0}|\leq \mathrm{const}\|\theta_k\|_{\ell^2(\m V,\mu)}\|\phi_k-\phi\|_{\ell^2(\m V,\mu)}.
\]
Thus
\[
|\la \theta_k,\phi_k-\phi\ra_{\m W_0}|\rightarrow 0,\quad\text{as }k\to\infty
\]
Then, from \eqref{ineq1} we have
\[
\limsup_{k\to\infty}\la \LG(\phi_k),\phi_k-\phi \ra\leq 0
\]We have from $\phi_k\to\phi$ weakly in $\m W_0$
\[
\limsup_{k\to\infty}\la \LG(\phi_k)-\LG(\phi),\phi_k-\phi \ra\leq 0
\]By the coercivity of $\LG$, we get
\[
\limsup_{k\to\infty}\|\phi_k-\phi\|_{\m W_0}\leq 0
\]Therefore we obtain
\begin{align*}
\phi_k&\rightarrow\phi\quad\text{strongly in }\m W_0\\
\LG\phi_k&\rightarrow\LG\phi\quad\text{strongly in }\m W_0.
\end{align*}It follows that there exist $\xi$ such that $\xi_k\to\xi$ weakly${}^*$ in $W_0$ and
\[
\lim_{k\to\infty}\la \LG(\phi_k)+\xi_k,\phi_k-\psi \ra= \la \LG(\phi)+\xi,\phi-\psi \ra
\]This implies that $\LG+\partial\m J: \m W_0\rightarrow \m W_0$ is pseudomonotone, which completes the proof.
\end{proof}
Let's consider the following additional assumption
\begin{enumerate}
\item[$\m H(\m j^0)$ ] There exists $\alpha_{\m j^0}>0$ such that
\[
\m j^0(s;t-s)+\m j^0(t;s-t)\leq \alpha_{\m j^0}|t-s|^2
\]for all $s,\,t\in\mathbb R$. 
\end{enumerate}
\begin{theorem}
Under assumptions $\m H(\m G)$, $\m H(\m j)$ and $\m H(\m j^0)$ with \[\alpha_{\m j^0}\vee\alpha_\m j<\frac{1}{2}\underline\alpha_\gamma\wedge\underline\alpha_\mu,\] the weak solution of the Problem \eqref{prob0} is unique. 
\end{theorem}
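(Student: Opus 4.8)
The plan is a direct energy estimate, exactly parallel to the classical uniqueness argument for monotone hemivariational inequalities: assume two weak solutions exist, insert each one as a test function in the variational inequality satisfied by the other, add the two inequalities, and then play the strong monotonicity of $\LG$ coming from \eqref{ineqLG1} against the one-sided Lipschitz-type bound $\m H(\m j^0)$. First I would observe that the condition $\alpha_\m j<\tfrac12\underline\alpha_\gamma\wedge\underline\alpha_\mu$ is precisely the hypothesis of Theorem \ref{smallness}, so under the stated assumptions a weak solution does exist and only uniqueness remains; the role of $\alpha_{\m j^0}$ is to control the interaction term below.

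Let $\phi_1,\phi_2\in\m W_0$ be weak solutions of \eqref{prob0} in the sense of the Definition preceding Theorem \ref{smallness}, so that for every $\psi\in\m W_0$
\[
\la\LG\phi_i,\psi-\phi_i\ra+\sum_{\m v\in\m V}\mu(\m v)\,\m j^0\big(\phi_i(\m v);\psi(\m v)-\phi_i(\m v)\big)\geq\la\m f,\psi-\phi_i\ra,\qquad i=1,2,
\]
where $\la\cdot,\cdot\ra$ denotes the $\ell^2(\m V,\mu)$-pairing used throughout Section 4. Since $\phi_1$ and $\phi_2$ are themselves admissible test functions, I would take $\psi=\phi_2$ in the inequality for $\phi_1$, take $\psi=\phi_1$ in the inequality for $\phi_2$, and add. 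The two right-hand sides cancel and, using the linearity of $\LG$ to write $\la\LG\phi_1,\phi_2-\phi_1\ra+\la\LG\phi_2,\phi_1-\phi_2\ra=-\la\LG(\phi_1-\phi_2),\phi_1-\phi_2\ra$, one arrives at
\[
\la\LG(\phi_1-\phi_2),\phi_1-\phi_2\ra\leq\sum_{\m v\in\m V}\mu(\m v)\Big[\m j^0\big(\phi_1(\m v);\phi_2(\m v)-\phi_1(\m v)\big)+\m j^0\big(\phi_2(\m v);\phi_1(\m v)-\phi_2(\m v)\big)\Big].
\]

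It then remains to bound both sides. For the left side, the computation \eqref{ineqLG1} applied to $\phi_1-\phi_2$ gives $\la\LG(\phi_1-\phi_2),\phi_1-\phi_2\ra\geq\tfrac12(\underline\alpha_\gamma\wedge\underline\alpha_\mu)\|\phi_1-\phi_2\|_{\m W_0}^2$. For the right side, applying $\m H(\m j^0)$ pointwise with $s=\phi_1(\m v)$ and $t=\phi_2(\m v)$, multiplying by $\mu(\m v)$ and summing over $\m v$ yields the bound $\alpha_{\m j^0}\sum_{\m v\in\m V}\mu(\m v)|\phi_1(\m v)-\phi_2(\m v)|^2=\alpha_{\m j^0}\|\phi_1-\phi_2\|_{\ell^2(\m V,\mu)}^2$, and the continuous embedding $\m W_0\hookrightarrow\ell^2(\m V,\mu)$ (whose constant is $1$ for the norms in use, or may be absorbed into $\alpha_{\m j^0}$) bounds this by $\alpha_{\m j^0}\|\phi_1-\phi_2\|_{\m W_0}^2$. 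Combining the two estimates gives $\big(\tfrac12\underline\alpha_\gamma\wedge\underline\alpha_\mu-\alpha_{\m j^0}\big)\|\phi_1-\phi_2\|_{\m W_0}^2\leq 0$, and since $\alpha_{\m j^0}<\tfrac12\underline\alpha_\gamma\wedge\underline\alpha_\mu$ the coefficient is strictly positive, forcing $\|\phi_1-\phi_2\|_{\m W_0}=0$, i.e. $\phi_1=\phi_2$.

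The argument is essentially routine, and I do not expect a genuine obstacle. The only points needing care are the justification that each solution may legitimately be used as a test function in the other's inequality and that the termwise application of $\m H(\m j^0)$ followed by summation is licit — both are fine because $\phi_1,\phi_2\in\m W_0\subset\ell^2(\m V,\mu)$ and the growth estimate of Proposition \ref{prop2} (together with $\mu(\m V)<\infty$) makes all the sums involved absolutely convergent — and the bookkeeping of the embedding constant between $\|\cdot\|_{\ell^2(\m V,\mu)}$ and $\|\cdot\|_{\m W_0}$ so that the smallness threshold $\alpha_{\m j^0}<\tfrac12\underline\alpha_\gamma\wedge\underline\alpha_\mu$ is exactly the one that closes the estimate.
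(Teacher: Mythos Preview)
Your proposal is correct and follows essentially the same route as the paper: test each weak solution against the other, add, use the strong monotonicity estimate \eqref{ineqLG1} on the left and the pointwise bound $\m H(\m j^0)$ summed against $\mu$ on the right, then absorb $\|\cdot\|_{\ell^2(\m V,\mu)}$ into $\|\cdot\|_{\m W_0}$ to conclude. Your additional remarks on the embedding constant and the convergence of the sums are fine and slightly more careful than the paper, but there is no substantive difference in method.
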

\begin{proof}
Let $\phi_1$ and $\phi_2$ be two weak solutions of Problem \eqref{prob0}. It follows then that 
\begin{align}\label{uni1}
\langle \LG \phi_1,\psi-\phi_1\rangle +\sum_{\m v\in\m V}\mu\,(\m v)\m j^0(\phi_1(\m v);\psi(\m v)-\phi_1(\m v))\geq \la \m f,\psi-\phi_1\rangle,\,\text{for all }\phi\\\label{uni2}
\langle \LG \phi_2,\psi-\phi_2\rangle +\sum_{\m v\in\m V}\mu\,(\m v)\m j^0(\phi_2(\m v);\psi(\m v)-\phi_2(\m v))\geq \la \m f,\psi-\phi_2\rangle, \,\text{for all }\phi
\end{align}
We replace $\phi$ in \eqref{uni1} by $\phi_2$ and in \eqref{uni2} by $\phi_1$ and we sum up the two inequalities. We obtain that 
\[
\langle \LG (\phi_1-\phi_2),\phi_2-\phi_1\rangle +\sum_{\m v\in\m V}\mu\,(\m v)\{\m j^0(\phi_1(\m v);\phi_2(\m v)-\phi_1(\m v))+j^0(\phi_2(\m v);\phi_1(\m v)-\phi_2(\m v))\}\geq 0
\]
It follows that 
\[
-\frac{1}{2}\underline\alpha_\gamma\wedge\underline\alpha_\mu\|\phi_2-\phi_1\|^2_{\m W_0}+\alpha_{\m j^0}\sum_{\m v\in\m V}\mu\,(\m v)|\phi_2(\m v)-\phi_1(\m v)|^2\geq 0
\]Thus
\[
(\alpha_{\m j^0}-\frac{1}{2}\underline\alpha_\gamma\wedge\underline\alpha_\mu)\|\phi_2-\phi_1\|^2_{\m W_0}\geq 0
\]If $\alpha_{\m j^0}<\frac{1}{2}\underline\alpha_\gamma\wedge\underline\alpha_\mu$, we have $\phi_1=\phi_2$, which completes the proof.

\end{proof}

\section{Existence Result for parabolic problem}
For $0<\m T<\infty$, we denote by $\mathcal V:=L^2(0,\m T;\ell^2(\m V,\mu))$ the usual time Sobolev space endowed with the norm
\[
\|\phi\|_{\mathcal V}=\left(\int_0^{\m T}\|\phi(t)\|^2_{\ell^2(\m V,\mu)}\,dt\right)^{1/2}
\]and consider a function $\m j:(0,\m T)\times\mathbb R\rightarrow\mathbb R$ which satisfies the following hypothesis $\m H(\m j)$:

\begin{enumerate}
\item[$\m H(\m j)_1$] for each $r\in\mathbb R$, the function $t\mapsto \m j(t,r)$ is measurable on $(0,\m T)$,
\item[$\m H(\m j)_2$] for a.e. $t\in(0,\m T)$, the functional $r\mapsto \m j(t,r)$ is locally Lipschitz,
\item[$\m H(\m j)_3$] there exists $\alpha_\m j>0$ such that
\[
|z|\leq \alpha_\m j(1+|r|^{}),\qquad \text{for all } z\in\partial \m j(t,r)\text{ and a.e. }t\in(0,\m T).
\]
\end{enumerate}

We define the superpotential $\m J:\mathcal V\rightarrow\mathbb R$ defined by
\[
\m J(\phi)=\sum_{\m v\in\m V}\mu(\m v)\, \int_0^{\m T}\m j(t,\phi(t,\m v))\,dt
\]for all $\phi\in \mathcal V$. Similarly to Propostion \ref{prop2}, we have the following result

\begin{proposition}\label{prop3}
Assume the hypothesis $H(\m j)$ is fulfilled. Then the functional $\m J$ is locally Lipschitz and there exists $\m c_{\m J}>0$ such that the following inequalities hold
\[
\m J^0(\phi;\psi)\leq \m c_{\m J}\left(1+\|\phi\|^{}_{\mathcal V}\right)\|\psi\|_{\mathcal V},\quad\forall \phi,\,\psi\in \mathcal V
\]and 
\[
\|\theta\|_{\mathcal V}\leq \m c_{\m J}\left(1+\|\phi\|^{}_{\mathcal V}\right),\quad \forall \theta\in\partial(\m J_{|\mathcal V})(\phi),\,\phi\in \mathcal V
\]
\end{proposition}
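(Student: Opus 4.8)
The plan is to mirror the proofs of Propositions~\ref{prop1} and~\ref{prop2} essentially verbatim, the only new ingredient being the time variable; to that end I would identify $\mathcal V=L^2(0,\m T;\ell^2(\m V,\mu))$ with the Lebesgue space $L^2(\m V\times(0,\m T),\,\mu\otimes dt)$ and perform every estimate with respect to this product measure, with $\sum_{\m v\in\m V}\mu(\m v)$ systematically replaced by $\sum_{\m v\in\m V}\mu(\m v)\int_0^\m T(\cdot)\,dt$. The first step is well-definedness and finiteness: by $\m H(\m j)_1$--$\m H(\m j)_2$ the superposition $(t,\m v)\mapsto\m j(t,\phi(t,\m v))$ is $\mu\otimes dt$-measurable for every $\phi\in\mathcal V$ (Carathéodory), and Lebourg's mean value theorem together with $\m H(\m j)_3$ gives the slicewise bound $|\m j(t,r)|\le|\m j(t,0)|+\alpha_\m j(1+|r|)|r|$; since $\mu(\m V)<\infty$, $\m j(\cdot,0)\in L^1(0,\m T)$ (which is needed already for $\m J$ to be $\mathbb R$-valued as stated), and $\phi\in L^2$, the series-integral defining $\m J(\phi)$ converges to a finite real number. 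Subtracting the $r$-independent term $\m j(t,0)$ changes neither $\partial\m j(t,\cdot)$, $\m j^0(t,\cdot\,;\cdot)$ nor the local Lipschitz rank, so one may assume $\m j(t,0)=0$ throughout.

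For local Lipschitz continuity I would reproduce the chain of inequalities from the proof of Proposition~\ref{prop1}: for $\phi_1,\phi_2\in\mathcal V$,
\begin{align*}
|\m J(\phi_1)-\m J(\phi_2)| &\le\sum_{\m v\in\m V}\mu(\m v)\int_0^\m T\bigl|\m j(t,\phi_1(t,\m v))-\m j(t,\phi_2(t,\m v))\bigr|\,dt\\
&\le\alpha_\m j\sum_{\m v\in\m V}\mu(\m v)\int_0^\m T\bigl(1+|\phi_1(t,\m v)|+|\phi_2(t,\m v)|\bigr)\,|\phi_1(t,\m v)-\phi_2(t,\m v)|\,dt,
\end{align*}
where the second line uses Lebourg's theorem and $\m H(\m j)_3$ on each slice; one application of the Cauchy--Schwarz inequality in $L^2(\mu\otimes dt)$ then yields $|\m J(\phi_1)-\m J(\phi_2)|\le\m c\,(1+\|\phi_1\|_\mathcal V+\|\phi_2\|_\mathcal V)\,\|\phi_1-\phi_2\|_\mathcal V$, so $\m J$ is locally Lipschitz with rank bounded by $\m c(1+2m)$ on the ball of radius $m$.

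For the two displayed inequalities I would first establish the time-dependent Aubin--Clarke estimate
\[
\m J^0(\phi;\psi)\le\sum_{\m v\in\m V}\mu(\m v)\int_0^\m T\m j^0\bigl(t,\phi(t,\m v);\psi(t,\m v)\bigr)\,dt,
\]
arguing exactly as in Proposition~\ref{prop1}: write $\lambda^{-1}\bigl(\m J(\theta+\lambda\psi)-\m J(\theta)\bigr)$ as the $\mu\otimes dt$-integral of the difference quotients $\lambda^{-1}\bigl(\m j(t,\theta(t,\m v)+\lambda\psi(t,\m v))-\m j(t,\theta(t,\m v))\bigr)$; along a sequence realizing the $\limsup_{\theta\to\phi,\ \lambda\downarrow0}$ one may, after passing to a subsequence, assume $\theta\to\phi$ a.e.\ and $|\theta|\le h$ a.e.\ for some $h\in L^2(\mu\otimes dt)$, so that by Lebourg and $\m H(\m j)_3$ these quotients are dominated by $\alpha_\m j(1+h+|\psi|)|\psi|\in L^1(\mu\otimes dt)$; the reverse Fatou lemma then moves the $\limsup$ inside and the pointwise bound $\limsup\le\m j^0(t,\phi;\psi)$ finishes it. Now property~(d) of the generalized gradient gives $\m j^0(t,r;s)=\max\{zs:z\in\partial\m j(t,r)\}\le\alpha_\m j(1+|r|)|s|$ by $\m H(\m j)_3$, and a further Cauchy--Schwarz yields
\[
\m J^0(\phi;\psi)\le\alpha_\m j\Bigl(\sum_{\m v\in\m V}\mu(\m v)\int_0^\m T(1+|\phi(t,\m v)|)^2\,dt\Bigr)^{1/2}\|\psi\|_\mathcal V\le\m c_\m J\,(1+\|\phi\|_\mathcal V)\,\|\psi\|_\mathcal V,
\]
which is the first assertion. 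The second follows as in Proposition~\ref{prop2}: by the norm characterization of the Clarke gradient (cf.\ \cite[Proposition~2.1.2]{C90}), for every $\theta\in\partial(\m J_{|\mathcal V})(\phi)$,
\[
\|\theta\|_\mathcal V=\sup\{\la\theta,\psi\ra:\|\psi\|_\mathcal V\le1\}\le\sup\{\m J^0(\phi;\psi):\|\psi\|_\mathcal V\le1\}\le\m c_\m J\,(1+\|\phi\|_\mathcal V).
\]

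Since the estimates themselves are transcriptions of those already carried out, the genuinely delicate points are exactly the measure-theoretic ones introduced by the time variable: the joint measurability of $(t,\m v)\mapsto\m j^0(t,\phi(t,\m v);\psi(t,\m v))$ (which uses the upper semicontinuity of $s\mapsto\m j^0(t,\cdot\,;s)$ in the second slot together with $\m H(\m j)_1$), and the construction of the uniform $L^1$-majorant for the difference quotients so that reverse Fatou legitimately applies in the proof of the Aubin--Clarke inequality. That is where I expect the only real work to lie; everything else is a word-for-word adaptation of Propositions~\ref{prop1}--\ref{prop2}.
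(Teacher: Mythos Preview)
Your proposal is correct and follows exactly the approach the paper intends: the paper omits the proof entirely, writing only ``Similarly to Proposition~\ref{prop2}, we have the following result'', so your adaptation of Propositions~\ref{prop1}--\ref{prop2} to the product measure $\mu\otimes dt$ on $\m V\times(0,\m T)$ is precisely what is being invoked. If anything, you are more careful than the paper, since you flag the need for an integrable majorant to justify reverse Fatou for the $\limsup$ (the paper's proof of Proposition~\ref{prop1} glosses over this even in the time-independent case).
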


We introduce the function spaces
\[
\mathcal W_0=L^2(0,T; \m W_0),\qquad \mathcal M_0=\{\phi\in \mathcal W_0\,|\, \frac{\partial\phi}{\partial t}\in \mathcal W_0\}
\]where the time derivative $\frac{\partial\phi}{\partial t}$ is understood in the sense of vector-valued distributions.  The norm
\[
\|\phi\|_{\mathcal M_0}:=\|\phi\|_{\mathcal W_0}+\|\frac{\partial\phi}{\partial t}\|_{\mathcal W_0}
\]make the space $\mathcal M_0$ a Banach space. Moreover, the embeddings $\mathcal M_0\subset L^2(0,T;\ell^2(\m V,\mu))$ and $\mathcal M_0\subset C(0,T;\ell^2(\m V,\mu))$ are compact and continuous, respectively.

The purpose of this section is to prove the existence of solutions for the parabolic hemivariational inequalities on graphs, which can be stated as follows: Find $\phi\in\mathcal M_0$ such that
\begin{equation}\label{para0}
\begin{cases}
\,\phi'+\LG\phi+\partial\m J(\phi)\ni\m f&\qquad \text{in }\m V\times (0,T) \\
\,\phi(\m v,0)=\phi_0 &\qquad \text{in }\m V%\\
%\,\phi\in L^2(0,T;\m W_0)
\end{cases}
\end{equation}

\begin{definition}
We say that $\phi\in\mathcal M_0$ is a weak solution to problem \eqref{para0}, if $\phi(0,\m v)=\phi_0(\m v)$ in $\m V$, and the following inequality holds
\begin{align*}
\int_0^\m T\sum_{\m v\in\m V}\mu(\m v)\frac{\partial \phi(t,\m v)}{\partial t}\left(\psi(t,\m v)-\phi(t,\m v)\right)\,dt+\int_0^\m T\LG\phi(t,\m v)\left(\psi(t,\m v)-\phi(t,\m v)\right)\,dt\\
+\int_0^\m T \sum_{\m v\in \m V}\mu(\m v)\m j^0(t,\phi(t,\m v);\psi(t,\m v)-\phi(t,\m v))\,dt\geq \int_0^\m T\sum_{\m v\in \m V}\m f(t,\m v)\left(\psi(t,\m v)-\phi(t,\m v)\right)\, dt
\end{align*}for all $\psi\in\mathcal W_0$.
\end{definition}

The following theorem is the main result of this section. 

\begin{theorem}\label{evol}
Let $\m f\in\mathcal W_0$ and assume that the hypotheses $H(\m G)$ and $\m H(\m j)$ are fulfilled. If $\m\alpha_{\m J}<\frac{1}{2}\underline\alpha_\gamma\wedge\underline\alpha_\mu$, then the problem \eqref{para0} admits a weak solution.
\end{theorem}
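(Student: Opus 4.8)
The plan is to reduce the parabolic hemivariational inequality to an operator inclusion of the form $Lu + Au \ni \m f$ on the space $\mathcal W_0$, where $L$ is the realization of the time derivative $\partial/\partial t$ with domain encoding the initial condition $\phi(0)=\phi_0$, and $A = \LG + \partial\m J$ acting pointwise in time, and then to apply Theorem \ref{thm0}. First I would set up $L:D(L)\subset\mathcal W_0\to\mathcal W_0^*$ by $L\phi = \phi'$ with $D(L)=\{\phi\in\mathcal M_0\,:\,\phi(0)=0\}$; after translating by a fixed lift of $\phi_0$ (using that $\mathcal M_0\hookrightarrow C(0,T;\ell^2(\m V,\mu))$, so the initial trace makes sense) one reduces to homogeneous initial data. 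The operator $L$ is linear, densely defined, and maximal monotone, because for $\phi\in D(L)$ one has $\la L\phi,\phi\ra = \frac12\|\phi(T)\|^2_{\ell^2(\m V,\mu)}\geq 0$ by the integration-by-parts formula valid on $\mathcal M_0$, and range/maximality follows from solvability of the linear Cauchy problem $\phi'+\phi=g$.

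Next I would verify the hypotheses of Theorem \ref{thm0} for $A=\LG+\partial\m J$ viewed as a map $\mathcal W_0\to 2^{\mathcal W_0^*}$. Boundedness and the fact that $A\phi$ is nonempty, closed, convex follow from the corresponding pointwise-in-time properties established in Section 3 together with Proposition \ref{prop3}: $\LG$ is linear bounded and strongly monotone with constant $\tfrac12\underline\alpha_\gamma\wedge\underline\alpha_\mu$ (exactly as in \eqref{ineqLG1}--\eqref{ineqLG2}, integrated over $(0,T)$), while $\partial\m J$ satisfies the growth bound $\|\theta\|_{\mathcal V}\leq \m c_{\m J}(1+\|\phi\|_{\mathcal V})$. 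Coercivity of $L+A$ reduces to coercivity of $A$ since $\la L\phi,\phi\ra\geq 0$, and the latter follows from the same computation as Claim 1 in the proof of Theorem \ref{smallness}, now on $\mathcal W_0$, using the smallness assumption $\alpha_\m J<\tfrac12\underline\alpha_\gamma\wedge\underline\alpha_\mu$ and the continuous embedding $\mathcal W_0\hookrightarrow\mathcal V$ so that the $\|\phi\|_{\mathcal V}^2$ term is absorbed into $\tfrac12\underline\alpha_\gamma\wedge\underline\alpha_\mu\,\|\phi\|_{\mathcal W_0}^2$.

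The main obstacle, and the step I would spend the most care on, is the $L$-pseudomonotonicity of $A$, i.e.\ property (c$'$). Here one takes $\phi_k\rightharpoonup\phi$ in $\mathcal W_0$ with $L\phi_k\rightharpoonup L\phi$ in $\mathcal W_0^*$, $\xi_k=\LG\phi_k+\zeta_k$ with $\zeta_k\in\partial\m J(\phi_k)$, $\xi_k\rightharpoonup\xi$, and $\limsup_k\la\xi_k,\phi_k-\phi\ra\leq 0$. The key compactness input is that $\mathcal M_0\hookrightarrow L^2(0,T;\ell^2(\m V,\mu))=\mathcal V$ is compact (Aubin--Lions type, stated after the definition of $\mathcal M_0$); since $\phi_k\in D(L)\subset\mathcal M_0$ is bounded there, $\phi_k\to\phi$ strongly in $\mathcal V$. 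Strong $\mathcal V$-convergence kills the $\la\zeta_k,\phi_k-\phi\ra$ term by the growth bound in Proposition \ref{prop3} and Hölder, hence $\limsup_k\la\LG\phi_k,\phi_k-\phi\ra\leq 0$; strong monotonicity of $\LG$ then upgrades $\phi_k\to\phi$ to strong convergence in $\mathcal W_0$. From strong convergence, $\LG\phi_k\to\LG\phi$ strongly and, passing to a subsequence with $\zeta_k\rightharpoonup\zeta$, the closedness of the graph of $\partial\m J$ under (strong$\times$weak) convergence (a consequence of upper semicontinuity of $(u,v)\mapsto\m J^0(u;v)$, cf.\ property (b) of the Clarke gradient) gives $\zeta\in\partial\m J(\phi)$, so $\xi=\LG\phi+\zeta\in A\phi$, and $\la\xi_k,\phi_k\ra\to\la\xi,\phi\ra$. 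This verifies (c$'$). Theorem \ref{thm0} then yields $\phi\in D(L)$ with $L\phi+\LG\phi+\zeta=\m f$ for some $\zeta\in\partial\m J(\phi)$; testing against $\psi-\phi$ and using $\la\zeta,\psi-\phi\ra\leq\m J^0(\phi;\psi-\phi)\leq\int_0^T\sum_{\m v}\mu(\m v)\m j^0(t,\phi(t,\m v);\psi(t,\m v)-\phi(t,\m v))\,dt$ (the parabolic analogue of Proposition \ref{prop1}(3)) produces the required inequality, while the reduction's lift restores $\phi(0)=\phi_0$.
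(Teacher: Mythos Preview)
Your proposal is correct and follows essentially the same route as the paper: reduce to homogeneous initial data by translating with a constant-in-time lift $\tilde\phi_0$, take $L=\partial_t$ on $D(L)=\{\phi\in\mathcal M_0:\phi(0)=0\}$ as the linear maximal monotone part, and verify that the (shifted) operator $\LG+\partial\m J$ is bounded, coercive under the smallness condition, and $L$-pseudomonotone via the compact embedding $\mathcal M_0\hookrightarrow\mathcal V$ together with the strong monotonicity of $\LG$, then apply Theorem~\ref{thm0}. The only cosmetic slip is that after translation the operator you must analyze is $\phi\mapsto\LG(\phi+\tilde\phi_0)+\partial\m J(\phi+\tilde\phi_0)$ rather than $\LG+\partial\m J$ itself, but since the shift contributes only lower-order bounded terms, all of your estimates go through unchanged; the paper carries this shift explicitly in its Claims~1 and~2.
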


\begin{proof}
First, we define the operator $\Lambda:\mathcal W_0\rightarrow\mathcal W_0$ by 
\[
\Lambda(\phi)(\psi):=\int_0^\m T\sum_{\m v\in\m V}\mu(\m v)\psi(t,\m v)\LG(\phi(t,\m v)+\tilde\phi_0(t,\m v))\,dt,
\]for all $\phi,\,\psi\in\mathcal W_0$ and where $\tilde\phi_0$ is such that $\tilde\phi_0(t,\m v)=\phi_0(\m v)$ for all $(t,\m v)\in (0,\m T)\times\m V$. From \eqref{ineqLG2} we have
\begin{align*}
\Lambda(\phi)(\psi)&=\int_0^\m T\sum_{\m v\in\m V}\mu(\m v)\psi(t,\m v)\LG(\phi(t,\m v)+\tilde\phi_0(t,\m v))\,dt\\
&\leq \frac{1}{2}\overline\alpha_\gamma\vee\overline\alpha_\mu\int_0^\m T\|\psi\|_{\m W_0}\,\|\phi+\tilde\phi_0\|_{\m W_0}\,dt\\
&\leq \frac{1}{2}\overline\alpha_\gamma\vee\overline\alpha_\mu\,\|\psi\|_{\mathcal W_0}\,\|\phi+\tilde\phi_0\|_{\mathcal W_0}
\end{align*}It follows that
\[
\|\Lambda(\phi)\|_{\mathcal W_0}\leq \frac{1}{2}\overline\alpha_\gamma\vee\overline\alpha_\mu \left(\|\phi\|_{\mathcal W_0}+\|\tilde\phi_0\|_{\mathcal W_0}\right),\quad\text{for all }\phi\in\mathcal W_0.
\]Thus, the operator $\Lambda$ is continuous. Moreover, the operator $\Lambda$ is strongly monotone. In fact, from \eqref{ineqLG1}, one can obtain 
\begin{align*}
\la\Lambda(\phi)-\Lambda(\psi),\phi-\psi\ra_{\mathcal W_0}&=\la\Lambda(\phi),\phi-\psi\ra_{\mathcal W_0}-\la\Lambda(\psi),\phi-\psi\ra_{\mathcal W_0}\\
&=\int_0^\m T\sum_{\m v\in\m V}\mu(\m v)(\phi(t,\m v)-\psi(t,\m v))\LG(\phi(t,\m v)+\tilde\phi_0(t,\m v))\,dt\\
&\quad-\int_0^\m T\sum_{\m v\in\m V}\mu(\m v)(\phi(t,\m v)-\psi(t,\m v))\LG(\psi(t,\m v)+\tilde\phi_0(t,\m v))\,dt\\
&=\int_0^\m T\sum_{\m v\in\m V}\mu(\m v)(\phi(t,\m v)-\psi(t,\m v))\LG(\phi(t,\m v)-\psi(t,\m v))\,dt\\
&\geq \frac{1}{2}\underline\alpha_\gamma\wedge\underline\alpha_\mu\int_0^\m T\|\phi(t,.)-\psi(t,.)\|^2_{\m W_0}\,dt\\
&=\frac{1}{2}\underline\alpha_\gamma\wedge\underline\alpha_\mu\,\|\phi-\psi\|^2_{\mathcal W_0}
\end{align*}for all $\phi,\,\psi\in\mathcal W_0$.

Define the operator $\m L:\m D(\m L)\subset\mathcal W_0\rightarrow \mathcal W_0$ by 
\[
\m L\phi=\frac{\partial\phi}{\partial t},\qquad\m D(\m L):=\{\phi\in\mathcal M_0\,|\,\phi(0)=0\}
\]which is closed, linear, densely defined and maximal monotone operator \cite{Z90}.

Now, we shall prove the hypotheses of the surjectivity theorem.
\vspace{0.10cm}

\textbf{Claim 1:} The multivalued operator $\Lambda+\partial\m J(.+\tilde \phi_0):\mathcal W_0\rightarrow 2^{\mathcal W_0}$ is bounded and pseudomonotone with respect to $\m D(\m L)$.

In fact, by the properties of Clarke's subdifferential, we deduce that the set $\Lambda(\phi)+\partial\m J(\phi+\tilde\phi_0)$ is nonempty, closed and convex in $\mathcal W_0$ for all $\phi\in\mathcal W_0$. By Proposition \ref{prop3} and the continuity of $\Lambda$, we obtain
\begin{align*}
\|\Lambda\phi+\xi\|_{\mathcal W_0}&\leq \|\Lambda\phi\|_{\mathcal W_0^*}+\|\xi\|_{\mathcal W_0^*}\\
&\leq \frac{1}{2}\overline\alpha_\gamma\vee\overline\alpha_\mu \left(\|\phi\|_{\mathcal W_0}+\|\tilde\phi_0\|_{\mathcal W_0}\right)+\alpha_\m J\left(1+\|\phi\|_\mathcal V+\|\tilde\phi_0\|_\mathcal V\right)\\
&\leq \alpha_\m J+(\alpha_\m J+\frac{1}{2}\overline\alpha_\gamma\vee\overline\alpha_\mu)(\|\phi\|_{\mathcal W_0}+\|\tilde\phi_0\|_{\mathcal W_0})
\end{align*}which implies that $\Lambda+\partial\m J(.+\tilde \phi_0):\mathcal W_0\rightarrow 2^{\mathcal W_0}$ is bounded. Moreover, since $\Lambda$ is linear and continuous (hence demicontinuous) and $\partial\m J$ is uppersemicontinuous from $\mathcal W_0$ to $w-\mathcal W_0$.

It remains to verify the last condition. let $\{\phi_n\}\subset\m D(\m L)$ and $\{\phi_n^*\}\subset\mathcal W_0$ be such that $\phi_n\rightarrow\phi$ weakly in $\mathcal W_0$, $\m L\phi_n\rightarrow\m L\phi$ weakly in $\mathcal W_0$, $\phi_n^*\in\Lambda\phi_n+\partial\m J(\phi_n+\tilde\phi_0)$ with $\phi_n^*\rightarrow\phi^*$ weakly in $\mathcal W_0$, and
\begin{equation}\label{eqq}
\limsup_{n\to\infty}\la\phi_n^*,\phi_n-\phi\ra_{\mathcal W_0}\leq 0
\end{equation}
Then, we are able to find a sequence $\{\xi_n\}\subset\mathcal W_0$ such that $\xi_n\in\partial\m J(\phi_n+\tilde\phi_0)$ and
\[
\phi_n^*=\Lambda\phi_n+\xi_n,\qquad\text{for each }n\in\mathbb N
\]
Consequently, from \eqref{eqq}, we get
\begin{equation}\label{eqsup}
\limsup_{n\to\infty}\la\Lambda\phi_n,\phi_n-\phi\ra_{\mathcal W_0}+\liminf_{n\to\infty}\la\xi_n,\phi_n-\phi\ra_{\mathcal W_0}\leq 0
\end{equation}
Since $\m W_0\subset \ell^2(\m V,\mu)$ and the embedding of $\m W_0$ in $\ell^2(\m V,\mu)$ is compact, we have that $\phi_n$ strongly converges to $\phi$ in $\mathcal V$. Furthermore, one has
\begin{equation}\label{inj}
\partial(\m J_{|\mathcal W_0})(\phi)\subset\partial(\m J_{|\mathcal V})(\phi)
\end{equation}which means that 
\begin{equation}\label{eqqq}
\la\xi_n,\phi_n-\phi\ra_{\mathcal W_0}=\la\xi_n,\phi_n-\phi\ra_\mathcal V
\end{equation}
Further, from the boundedness of $\{u_n\}$ in $\mathcal W_0$, we have that $\{\xi_n\}$ is bounded both in $\mathcal V$ and in $\mathcal W_0$. Then,  by \eqref{eqqq}, we pass to the limit as $n\to\infty$ to get
\[
\lim_{n\to\infty}\la\xi_n,\phi_n-\phi\ra_{\mathcal W_0}=\lim_{n\to\infty}\la\xi_n,\phi_n-\phi\ra_\mathcal V=0
\]This convergence combined with \eqref{eqsup} and the monotonicity of $\Lambda$ implies
\[
\limsup_{n\to\infty}\|\phi_n-\phi\|_{\mathcal W_0}^2\leq \m A^{-1}\limsup_{n\to\infty}\la\Lambda\phi_n,\phi_n-\phi\ra_{\mathcal W_0}+\m A^{-1}\lim_{n\to\infty}\la\Lambda\phi,\phi-\phi_n\ra_{\mathcal W_0}\leq 0
\]where $\m A=\frac{1}{2}\underline\alpha_\gamma\wedge\underline\alpha_\mu$. Hence $\phi_n\to\phi$ strongly in $\mathcal W_0$. On the other side, the reflexivity of $\mathcal W_0$ and boundedness of $\{\xi_n\}\subset\mathcal W_0$ allow to assume, at least for a subsequence, that $\xi_n$ converges weakly in $\mathcal W_0$ to some $\xi\in\mathcal W_0$.
Since $\partial\m J$ is upper semicontinuous from $\mathcal W_0$ to $w-\mathcal W_0$ and it has convex and closed values, it is closed from $\mathcal W_0$ to $w-\mathcal W_0$ (see \cite[Theorem 1.1.4]{KOZ01}). Therefore, we obtain $\xi\in\partial\m J(\phi+\tilde\phi_0)$.

To conclude, we have $\phi^*=\xi+\Lambda\phi\in\Lambda\phi+\partial\m J(\phi+\tilde\phi_0)$ and
\[
\la \phi_n^*,\phi_n\ra_{\mathcal W_0}=\la\xi_n+\Lambda\phi_n,\phi_n\ra_{\mathcal W_0}\to \la\xi+\Lambda\phi,\phi\ra_{\mathcal W_0}=\la \phi^*,\phi\ra_{\mathcal W_0}
\]which means that the operator $\Lambda+\partial\m J(.+\tilde \phi_0):\mathcal W_0\rightarrow 2^{\mathcal W_0}$ is pseudomonotone with respect to $\m D(\m L)$.

\textbf{Claim 2:} The operator $\Lambda+\partial\m J(.+\tilde \phi_0):\mathcal W_0\rightarrow 2^{\mathcal W_0}$ is coercive.

For all $\phi\in\mathcal W_0$ one has
\begin{align*}
&\la \Lambda\phi+\partial\m J(\phi+\tilde\phi_0),\phi\ra_{\mathcal W_0}=\la \Lambda\phi,\phi\ra_{\mathcal W_0}+\la \partial\m J(\phi+\tilde\phi_0),\phi\ra_{\mathcal W_0}\\
&\qquad\geq\m A\,\|\phi\|^2_{\mathcal W_0}-\m A\,\|\tilde\phi_0\|_{\mathcal W_0}\|\phi\|_{\mathcal W_0}+\la \partial\m J(\phi+\tilde\phi_0),\phi\ra_{\mathcal V}\\
&\qquad\geq\m  A\,\|\phi\|^2_{\mathcal W_0}-\m A\,\|\tilde\phi_0\|_{\mathcal W_0}\|\phi\|_{\mathcal W_0}-\| \partial\m J(\phi+\tilde\phi_0)\|_{\mathcal V}\,\|\phi\|_{\mathcal V}\\
&\qquad\geq\m  A\,\|\phi\|^2_{\mathcal W_0}-\m A\,\|\tilde\phi_0\|_{\mathcal W_0}\|\phi\|_{\mathcal W_0}-\alpha_\m J\left(1+\|\phi+\tilde\phi_0\|_{\mathcal V}\right)\|\phi\|_{\mathcal V}\\
&\qquad\geq\m  A\,\|\phi\|^2_{\mathcal W_0}-\m A\,\|\tilde\phi_0\|_{\mathcal W_0}\|\phi\|_{\mathcal W_0}-\alpha_\m J\|\phi\|_{\mathcal W_0}-\alpha_{\m J}\|\phi\|^2_{\mathcal W_0}-\alpha_{\m J}\|\phi\|_{\mathcal W_0}\|\tilde\phi_0\|_{\mathcal W_0}\\
&\qquad\geq \left(\left(\m A-\alpha_\m J\right)\|\phi\|_{\mathcal W_0}-(\m A+\alpha_\m J)\,\|\tilde\phi_0\|_{\mathcal W_0}+\alpha_\m J\right)\|\phi\|_{\mathcal W_0}\\
&\qquad\geq c(\|\phi\|_{\mathcal W_0})\|\phi\|_{\mathcal W_0}
\end{align*}
where $c:\mathbb R^+\rightarrow\mathbb R$ with $c(r)=\left(\m A-\alpha_\m J\right)r-(\m A+\alpha_\m J)\,\|\tilde\phi_0\|_{\mathcal W_0}+\alpha_\m J$. It is clear that $c(r)\rightarrow\infty$ as $r\to\infty$, thus the operator $\Lambda+\partial\m J(.+\tilde \phi_0):\mathcal W_0\rightarrow 2^{\mathcal W_0}$ is coercive.

We are now in a position to apply the surjectivity result. We deduce that there exists a function $\chi\in\mathcal W_0$ with $\chi(0)=0$ solving the following inclusion problem
\begin{equation}\label{prob00}
\begin{cases}
\m L\chi+\Lambda\chi+\partial\m J(\chi+\tilde\phi_0)\ni\m f,\quad\text{ in }\mathcal W_0\\
\chi(0)=0.
\end{cases}
\end{equation}
\textbf{Claim 3:} If $\chi\in\mathcal M_0$ is a solution to problem \eqref{prob00}, then $\phi=\chi+\tilde\phi_0$ is a weak solution to problem \eqref{para0}.

Let $\chi\in\mathcal M_0$ be a solution to problem \eqref{prob00}. Hence $\phi=\chi+\tilde\phi_0$ solves the following problem

\begin{equation}
\begin{cases}
\frac{\partial\phi}{\partial t}+\Lambda(\phi-\tilde\phi_0)+\partial\m J(\phi)\ni\m f,\quad\text{ in }\mathcal W_0\\
\phi(0)=\phi_0.
\end{cases}
\end{equation}
By the definition of generalized Clarke subdifferential we obtain $\eqref{para0}$. This completes the proof.

\end{proof}

\section{Concluding remarks}

In this section we give some remarks and extensions of the results proved in previous sections.

\begin{enumerate}
\item Let $\Phi:\m W_0\rightarrow\bar{\mathbb R}$ be a proper, convex and lower semicontinuous functional such that $0\in\partial_C\Phi(\phi_0)$, where $\partial_C$ is the subdifferential in the sense of convex analysis. Suppose additionally that $\phi_0\in\mathrm{int }D(\Phi)$. Then, the variational-hemivariational inequality: Find $\phi\in\mathcal M_0$ such that 
\begin{equation}\label{para}
\begin{cases}
\,\phi'+\LG\phi+\partial\m J(\phi)+\partial_C\Phi(\phi)\ni\m f&\qquad \text{in }\m V\times (0,T) \\
\,\phi(\m v,0)=\phi_0 &\qquad \text{in }\m V%\\
%\,\phi\in L^2(0,T;\m W_0)
\end{cases}
\end{equation}admits a solution. To prove the existence of Problem \eqref{para}, let us consider the functional $\Psi:\mathcal W_0\rightarrow\bar{\mathbb R}$ defined by
\[
\Psi(\phi)=\int_0^\m T\Phi(\phi+\tilde\phi_0)\,dt
\]Now, it suffices to continue on the proof of Theorem \ref{evol} and prove, additionally, that the operator $\partial_C\Psi$ is maximal monotone and strongly quasi-bounded with $0\in\partial\Psi(0)$. The existence follows by the surjectivity result stated by Theorem 3.1 in \cite{GMO15}. 
\item One can think about an alternative proof of existence in both the elliptic and parabolic problems by using the Galerkin scheme adapted to graph theory context. Let $(\m G_n)_{n\geq 0}$ be a growing family of finite graphs that exhaust $\m G$ in the sense of \cite[Definition 4.1]{M82} and \cite[Definition 3.3]{M13} and consider the problem of finding $\phi_n$ such that 
\begin{equation}\label{approx}
\phi_n'+\LGn\phi_n+\sum_{\m v\in\m V_n}\mu(\m v)\m j_n'(\phi_n(\m v))=\m f
\end{equation}where $\m j_n$ is a mollification of $\m j$. By using techniques from the proof of Theorem 3.6 in \cite{HM15} and some standard calculation on the nonlinear term, one can prove that the sequence $(\phi_n)_n$ on $\m G_n$ is bounded in $H^1(0,\m T;\ell^2(\m V,\mu))$ and weak${}^*$ in $L^\infty(0,\m T;\m W_0)$. By taking a subsequence if necessary, it possible to prove that $(\phi_n)_n$ converges to some $\phi$ and $\m j_n'(\phi_n)$ converges in $\ell^2(\m V,\mu)$ to some $\xi$. By the convergence theorem of Aubin and Cellina \cite{AC84}, it is clear that $\xi\in\partial\m j(\phi)$ and by taking the limit in \eqref{approx}, one can see that $\phi$ resolves Problem \eqref{evol}.

\item Let $\m h$ be some nonnegative continuous function which satisfies with $\m j$ the following growth condition
\[
|\m h(\xi_1)\xi|\leq c(1+|\xi_1|+|\xi_2|),\quad \text{for all }\xi_1,\,\xi\in\mathbb R,\text{ with } \xi\in\partial\m j(\xi_2)
\]where $c$ is some nonnegative constant. One can prove a version of Aubin-Clarke theorem for discrete functionals in the form:
\[
\m J(\phi)=\sum_{\m v\in \m V}\mu(\m v)\m h(\phi(\m v))\m j(\phi(\m v))
\]With the above hypotheses we have for $\phi$, $\psi$ in $\ell^2(\m V,\mu)$ that
\[
\m J^\circ(\phi;\psi)\leq \sum_{\m v\in\m V}\mu(\m v)\m h(\m v)\m j^\circ(\phi(\m v);\psi(\m v))
\]
With some modifications, one can prove that the quasi-hemivariational versions of Problems \eqref{para0} and \eqref{evol} admit weak solutions.

\item By using Theorem 3.6 in \cite{HM15}, the theory in this paper can be applied for the operator $\mathscr L_{\gamma,0}^\m G$ if we assume that $\m G$ is uniformly locally finite and satisfy the $d-$ isoperimetric inequality for some $d\geq 2$
\end{enumerate}

\par\bigskip

\subsection*{Acknowledgments}%We would like to thank  for the valuable discussion.

\end{document}